 \theoremstyle{plain}
 \newtheorem{thm}{Theorem}[section]
 \newtheorem{lem}[thm]{Lemma}
 \numberwithin{equation}{section} 
 \numberwithin{figure}{section} 
 \newtheorem{proposition}[thm]{Proposition}
 \newtheorem{lemma}[thm]{Lemma}
 \theoremstyle{remark} 
 \newtheorem{rmk}[thm]{Remark}
 \newtheorem{exple}[thm]{Example}
 \newtheorem*{acknowledgement*}{Acknowledgement}
 \theoremstyle{definition}
\newcommand{\R}{\mathbb R}
\newcommand{\C}{\mathcal{C}}
\newcommand{\N}{\mathbb N}
\newcommand{\M}{\mathcal{M}}
\newcommand{\clomega}{\overline{\Omega}}
\newcommand{\diver}{{\rm div}}
\newcommand{\spt}{{\rm spt}}
\newcommand{\1}{{\bf 1}}
\newcommand{\e}{\varepsilon}
\newcommand{\ws}{\stackrel{*}{\rightharpoonup}}
\newcommand{\Lip}{{\mathcal Lip}}
\newcommand{\D}{\mathcal{D}}
\newcommand{\mut}{{\mu_t}}
\begin{document}

\title[Sandpile in a silos]{Duality theory and optimal transport for sand piles growing in a silos}

\author{Luigi De Pascale}\address{Dipartimento di Matematica, Universit\'a di Pisa, Via Buonarroti 1/c, 56127
  Pisa, ITALY}
\author{Chlo\'e Jimenez}\address{Laboratoire de Math\'ematiques de Bretagne Atlantique, 6 avenue Le Gorgeu CS 93837, 29238 BREST cedex 3,
FRANCE}

\begin{abstract} We prove existence and uniqueness of solutions for a system of PDEs which describes the growth of a sandpile in a silos with flat bottom under the action of a vertical, measure source. The tools we use are a discrete approximation of the source and the duality theory for optimal transport (or Monge-Kantorovich) problems.
\end{abstract}

\keywords{Sand piles models, Monge-Kantorovich problem, optimal transport problem, convex duality, tangential gradient}
\subjclass[2010]{49Q20, 35K20, 35K55, 47J20, 49K30, 35B99}
\date{25 April 2015}
\maketitle

\section{Introduction}

Let $\Omega$ be a bounded, convex open subset of $\R^d$ with $2\le d\in \N$, let $g:\partial \Omega\to \R^+$ be a bounded lower semi-continuous function, and let $f\in L^\infty(0,T,\M(\Omega))$ with  $0 \le f$. We denote by $\M (\Omega)$
the set of Borel measures on $\Omega$ with finite total variation. When $f$ is time constant we will prove existence of solutions $(u,\mu,\nu)$ of problem  
\begin{equation}\label{weakpb}\left\{
\begin{array}{l}
\mbox{(PDE)}\quad \partial_t u-{\rm div}(D_\mut u\mut)=f-\nu\mbox{ in } \R^d\times]0,T[, \\
\mbox{(C): Constraints}\quad\left\{
\begin{array}{l c}
\vert Du\vert \le 1 &\mbox{ in } \Omega\times]0,T[,\\
\vert D_\mut u\vert =1 &\mut\mbox{-a.e. in} \ \Omega\ \mbox{for a.e.} t\in ]0,T[, \\
\end{array}
\right.\\
\\
\mbox{(B): Boundary conditions}\quad \left\{ 
\begin{array}{l }
(B1)\quad 0\le u(x,t) \le g(x) \quad\mbox{ in } \partial\Omega\times]0,T[,\\
(B2)\quad  u(x,t)=g(x)\quad\nu_t\mbox{-a.e.}\!\ (x,t)\in \Omega\times]0,T[,\\
\end{array}
\right.\\
\\
\mbox{(I): Initial conditions}\quad
u(\cdot,0)=0   \mbox{ in }\Omega,\\
\end{array}
\right. 
\end{equation}

with the regularity (\ref{regularity})\footnote{As   $L^{\infty}(0,T,\M^+(\Omega))\subset \M(\Omega\times ]0,T[)$, we will often integrate with respect to $\mu$ or write $\mu$-a.e. $(x,t)$ for $\mu_t$-a.e. $x$, a.e. $t$. }:
\begin{equation}\label{regularity}
\begin{array}{l}
u\in L^\infty(0,T,W^{1,\infty}(\Omega)),\ \partial_t u\in L^\infty(0,T,\M(\Omega)),\\
 \mu\in L^{\infty}(0,T,\M^+(\Omega)),\  \nu\in L^\infty(0,T,\M^+(\partial\Omega)).
 \end{array}
\end{equation}
A function $u$ as above is, in particular, in $\C([0,T];L^2(\Omega))$\footnote{see, for example, \cite{Dro2001Prep}  page 54} 
which gives sense to the initial data ((\ref{weakpb}),(I)). 

In these equations $Du$ denotes the spatial part of the derivative while $\partial_t u$ is the time derivative of $u$.  
The divergence is only spatial and is intended in the distributional sense. The meaning of $D_\mut u$ will be given later and should be seen as the part of $Du$ 
which is relevant for the measure $\mu$. 

As usually the equality given by (PDE) must be intended  as 
$$\frac {d \ }{dt}  \int_{\R^d} u(\cdot,t)\varphi(\cdot) \ dx +\int D_\mut u\cdot D\varphi \ d\mut=\int_{\Omega} \varphi df_t -
\int_{\partial \Omega} \varphi\ d\nu_t, \ \ \mbox{in} \ \D' (0,T)$$
for all $\varphi\in \D(\R^d)$. Here $\D(A)$ denotes the space of smooth functions   compactly supported inside $A$.

The function $u$ will be proved to be unique, we will show that the problem above is equivalent to a variational inequality in the spirit of the original paper \cite{prigozhin1996variational}. One of the main tools will be the duality theory for optimal transport problems. This approach will allow us to study some form of uniqueness and mild regularity for $\mu$. More precisely, we will prove the two following results:

\begin{thm}\label{theo1}
Let $f\in L^\infty(0,T,\M(\Omega))$ and $(u,\mu,\nu)$ satisfying (\ref{regularity}) with the boundary condition (\ref{weakpb},(B1)), and  assume $u(\cdot,t)\in {\mathcal Lip}_1(\Omega)\mbox{ a.e. }t$.\\ Then $(u,\mu,\nu)$ is a solution of (\ref{weakpb}) if and only if (I) is satisfied and
$f_t-\partial_tu(\cdot,t)\in \partial I_\infty(u(\cdot, t))\mbox{a.e. }t\in]0,T[.$ \\
This last condition means that $u$ is a solution, a.e. $t$, of the following maximization problem: 
\begin{equation}\label{Kantor1}
\max\{\langle f-\partial_t u(\cdot, t),v\rangle:\ v\in \Lip_1(\Omega),\ 0\le v(x)\le g(x)\mbox{ on }\partial \Omega\}.
\end{equation}
Moreover (\ref{Kantor1}) is the dual formulation of the mass transport problem (\ref{EqMonge}) and to any optimal choice of $\nu$ corresponds a unique optimal $\mu$ defined by:
 $$\langle\mu_t,\varphi\rangle=\int_{\clomega^2}\int_0^1\varphi((1-s)x+sy)\vert y-x\vert\ ds\ d\gamma_t(x,y)\quad \forall \varphi\in \C_b(\Omega)$$
where $\gamma_t$ (together with $\nu_t$) is any solution of (\ref{EqMonge}).
\end{thm}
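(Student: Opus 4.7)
The plan is to prove the equivalence in two directions, and then to derive the formula for $\mut$ as a direct consequence of optimal-transport duality. For the forward direction, suppose $(u,\mu,\nu)$ solves (\ref{weakpb}). After a time-mollification (needed since $\partial_t u$ is only a time-valued measure), I would test (PDE) against $u$ itself to obtain
$$\lag f_t-\partial_t u(\cdot,t),\, u(\cdot,t)\rag \;=\; \int D_{\mut} u \cdot Du\, d\mut \;+\; \int u\, d\nu_t.$$
By the defining identity of the tangential gradient, $D_{\mut} u \cdot Du = |D_{\mut} u|^2 = 1$ $\mut$-a.e.\ by (C); together with (B2), the right-hand side becomes $\mut(\clomega)+\int_{\partial\Omega} g\, d\nu_t$. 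Testing (PDE) against an arbitrary competitor $v\in\Lip_1(\Omega)$ with $0\le v\le g$ on $\partial\Omega$, the $\mut$-a.e.\ bound $|D_{\mut} u\cdot Dv|\le 1$ and $v\le g$ on $\spt\nu_t$ give $\lag f_t-\partial_t u,v\rag \le \mut(\clomega)+\int g\, d\nu_t = \lag f_t-\partial_t u, u\rag$, so $u(\cdot,t)$ maximizes (\ref{Kantor1}).

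For the reverse direction, assume $u$ solves (\ref{Kantor1}) a.e.\ $t$. By the duality between (\ref{Kantor1}) and the Monge--Kantorovich problem (\ref{EqMonge}), I pick an optimal pair $(\gamma_t,\nu_t)$ and define $\mut$ by the Bouchitt\'e--Buttazzo--Seppecher transport-density formula of the statement. A Fubini computation on this definition yields the identity
$$\int D\varphi \cdot \tfrac{y-x}{|y-x|}\, d\mut \;=\; \int_{\clomega^2}\bigl(\varphi(y)-\varphi(x)\bigr)\, d\gamma_t(x,y)\qquad \forall\,\varphi\in\D(\R^d).$$
Combined with the complementary-slackness relation $u(y)-u(x)=|y-x|$ $\gamma_t$-a.e.\ (which follows from $u\in\Lip_1$ and optimality), this identifies $D_{\mut} u$ with the unit ray-direction $(y-x)/|y-x|$ and gives (C). The marginal constraint of $\gamma_t$ in (\ref{EqMonge}) encodes the mass balance $f_t-\partial_t u$ on $\Omega$ versus $\nu_t$ on $\partial\Omega$, which converts the identity above into the weak form of (PDE); conditions (B1)--(B2) follow from the admissibility of $u$ in (\ref{Kantor1}) and from $\spt\nu_t\subset\{u=g\}$, itself a direct consequence of complementary slackness. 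Uniqueness of $\mut$ given $\nu_t$ comes from classical results on uniqueness of the transport density for Monge problems with Euclidean cost (Ambrosio; De Pascale--Evans): once $\nu_t$ is fixed, the resulting $\mut$ does not depend on the particular optimal plan $\gamma_t$.

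The main technical obstacle is the rigorous justification of the ``test against $u$'' step in the forward direction: since $\partial_t u$ is only a time-valued measure, one must regularize in time and exploit the fact that $u\in\C([0,T];L^2(\Omega))$ to give meaning to $\tfrac{1}{2}\tfrac{d}{dt}\|u(\cdot,t)\|_{L^2}^2=\lag \partial_t u,u\rag$. A secondary delicate point is verifying $\mut$-a.e.\ the identification of $D_{\mut}u$ with the unit ray-direction, which requires the tangent-space description of the transport density and a ray-by-ray disintegration of $\mut$.
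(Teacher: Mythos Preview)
Your plan is essentially correct and close to the paper's route, with one packaging difference. For the forward implication the paper does \emph{not} test against a generic competitor: it first proves by Fenchel duality (Proposition~\ref{dual1}) that $\max(\ref{max})=\inf(\ref{mincurrent})$, then a single integration by parts (Theorem~\ref{DualEXT}(i)) shows $\lag f_t-\partial_t u,u\rag=\mut(\clomega)+\int g\,d\nu_t$, and equality of primal and dual values forces optimality of $u$ and of $(D_{\mut}u\,\mut,\nu_t)$ simultaneously. Your direct comparison with a competitor $v$ works too and is in fact more elementary, since it avoids Proposition~\ref{dual1} altogether; just note that the integration-by-parts formula (\ref{IPPspace}) yields $\int D_{\mut}u\cdot D_{\mut}v\,d\mut$, not $\int D_{\mut}u\cdot Dv\,d\mut$---the bound survives because $|D_{\mut}v|\le\|Dv\|_\infty\le 1$ (Example~\ref{exemple}). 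Your worry about time mollification is misplaced: the regularity (\ref{regularity}) gives $\partial_t u(\cdot,t)\in\M(\Omega)$ for a.e.\ $t$, so $\lag\partial_t u(\cdot,t),u(\cdot,t)\rag$ is just a measure paired with a $\C_b$ function, and the whole argument runs pointwise in $t$ with no smoothing (the paper never mollifies here). For the reverse implication you follow essentially Lemma~\ref{PrimalDualClassic} and Theorem~\ref{DualEXT}(ii); the one point you skip is why the optimal $\nu_t$ from (\ref{EqMonge}) can be taken nonnegative---this is Lemma~\ref{nupos} in the paper and requires $u\ge 0$, which follows from (I) and the comparison argument of Proposition~\ref{crescendo} (whose proof uses only the variational inequality, so it is available at this stage).
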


\begin{thm}
Let $f\in L^\infty(0,T,\M(\Omega))$ be constant in time. Then (\ref{weakpb}) admits a solution $(u,\mu,\nu)$ satisfying (\ref{regularity}). Moreover $u$ is unique and $\partial_t u\in L^2(\Omega\times]0,T[).$
\end{thm}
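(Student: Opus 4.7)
The plan is to combine Theorem \ref{theo1} with a standard gradient-flow approach in $L^2(\Omega)$: reformulate \eqref{weakpb} as an evolution inclusion for $u$ alone, approximate the measure $f$ by $L^2$ data, solve the approximate problems via Brezis--Komura, and pass to the limit. The time-independence of $f$ is crucial to obtain the $L^2$ bound on $\partial_t u$.

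By Theorem \ref{theo1}, constructing a solution $(u,\mu,\nu)$ of \eqref{weakpb} with the regularity \eqref{regularity} amounts to finding $u$ with $u(\cdot,0)=0$, $u(\cdot,t)\in K_g:=\{v\in\Lip_1(\clomega):0\le v\le g\mbox{ on }\partial\Omega\}$ a.e.\ $t$, and
$$f-\partial_t u(\cdot,t)\in \partial I_\infty(u(\cdot,t))\quad \mbox{a.e. } t\in(0,T),$$
where $I_\infty$ is the indicator of $K_g$ viewed on $L^2(\Omega)$; the measures $\mu,\nu$ are then recovered from any optimal solution of the dual transport problem \eqref{Kantor1}.

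Mollifying $f$ yields $f_k\in L^\infty(\Omega)\cap\M^+(\Omega)$ with $f_k\ws f$ and $\sup_k\|f_k\|_{L^1}\le \|f\|_{\M}$. The functional $\Phi_k(v):=I_\infty(v)-\langle f_k,v\rangle$ is proper, convex and lower semi-continuous on $L^2(\Omega)$ (closedness of $K_g$ in $L^2$ follows from Arzel\`a--Ascoli, since its elements are equi-Lipschitz and equi-bounded), with $0\in D(\Phi_k)$ because $g\ge 0$. Brezis--Komura then provides a unique $u_k\in\C([0,T];L^2(\Omega))$ with $\partial_t u_k\in L^2(\Omega\times(0,T))$ solving $\partial_t u_k+\partial I_\infty(u_k)\ni f_k$, $u_k(\cdot,0)=0$; in particular $u_k(\cdot,t)\in K_g$, providing a uniform $L^\infty(0,T;W^{1,\infty}(\Omega))$ bound. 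Time-independence of $f_k$ gives the Lyapunov identity
$$\int_0^T\|\partial_t u_k\|_{L^2}^2\,dt=\Phi_k(0)-\Phi_k(u_k(\cdot,T))\le \|f_k\|_{L^1}\|u_k\|_{L^\infty}\le C,$$
uniformly in $k$. Aubin--Lions then extracts a subsequence with $u_k\to u$ in $\C([0,T];L^2(\Omega))$ and $\partial_t u_k\rightharpoonup \partial_t u$ weakly in $L^2$, delivering the required regularity $u\in L^\infty(0,T;W^{1,\infty})$ and $\partial_t u\in L^2(\Omega\times(0,T))$.

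Passing to the limit in the variational characterization $\int_0^T\langle \partial_t u_k-f_k,v-u_k\rangle\,dt\ge 0$ (valid for all $v\in L^2(0,T;K_g)$) uses strong $L^2$-convergence to handle the quadratic term $\int_0^T\langle \partial_t u_k,u_k\rangle\,dt=\tfrac12\|u_k(\cdot,T)\|_{L^2}^2$, and the uniform Lipschitz bound together with $f_k\ws f$ to pass to the limit in $\int_0^T\langle f_k,v-u_k\rangle\,dt$. The resulting inequality is exactly the evolution inclusion above, so Theorem \ref{theo1} produces the measures $\mu,\nu$. Uniqueness of $u$ follows from monotonicity of $\partial I_\infty$: two solutions with identical data satisfy $\tfrac12\frac{d}{dt}\|u_1-u_2\|_{L^2}^2\le 0$ and hence coincide. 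The main obstacle is precisely this limit passage: since $f$ is only a measure and $\partial I_\infty$ is multivalued, one must combine the strong $\C([0,T];L^2)$-compactness --- itself made possible by the $L^2$-bound on $\partial_t u_k$, which in turn relies on $f$ being time-independent --- with the uniform Lipschitz control on $u_k$ to ensure that the weak-$\ast$ limit of the forcings fits consistently into the subdifferential relation at the limit $u$.
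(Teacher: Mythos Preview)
Your approach is sound but genuinely different from the paper's. The paper approximates $f$ by finite sums of Dirac masses $f^n=\sum c_i^n\delta_{y_i^n}$ and, for each $n$, constructs the solution $u^n$ \emph{explicitly} via the ODE system $\dot r_j=c_j/|A_j(t)|$ for the heights at the source points (Section~\ref{Discret}); the uniform estimates of Proposition~\ref{EstimDis}, including the $L^2$ bound on $\partial_t u^n$, are read off directly from this explicit form. You instead mollify $f$ to $L^2$ data and invoke Brezis--Komura abstractly, obtaining the same $L^2$ energy identity from the Lyapunov structure of the gradient flow. Your route is shorter and avoids the ODE analysis entirely; the paper's route, in return, yields the constructive approximation formula for $u$ recorded in item~4) of the final theorem, which your argument does not provide.

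Two points deserve a sentence each in your write-up. First, to invoke the reconstruction of $(\mu,\nu)$ via Theorem~\ref{theo1}/Theorem~\ref{Prig} you need $u\ge 0$ (so that Lemma~\ref{nupos} forces $\nu\ge 0$); this follows by comparing with the trivial solution for $f=0$ via monotonicity of $\partial I_\infty$, but you should say so. Second, the regularity \eqref{regularity} asks for $\partial_t u\in L^\infty(0,T;\M(\Omega))$ and $\mu,\nu\in L^\infty(0,T;\M)$, not merely $\partial_t u\in L^2$; once $u\ge 0$ and $\partial_t u\ge 0$ (the latter from time-independence of $f$ and the $L^2$-contraction), the marginal constraint $\pi^2_\sharp\gamma_t=f$ in \eqref{EqMonge} gives $\|\partial_t u(\cdot,t)\|_{L^1}+\nu_t(\partial\Omega)=f(\Omega)$ and hence the required $L^\infty_t$ bounds on $\partial_t u$, $\nu$ and (via the transport-density formula) on $\mu$.
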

\noindent The results above will be proved by approximating $f$ by a finite number of point sources.

The differential system (\ref{weakpb}) has been proposed  several years ago in \cite{prigozhin1996variational} to describe the growth of a sandpile on a bounded 
table under the action of a vertical source here modeled by $f$. 
In the model the sandpile is described as composed by an underlying standing layer, here modeled by $u$ and a rolling layer which is here modeled by $\mu$.
The material rolls downhill only when the standing layer reaches a critical slope which is characteristic of the material. This critical slope, here, is normalised to $1$ and the conditions $(C)$ gives account of this aspect of the behaviour.
Together with the boundary conditions $(BC)$ the system models the growth of the pile inside a silos with wall on $\partial \Omega$ of height $g$.
At some point the sandpile will reach the top of the wall and the sand will start to fall out. The measure $\nu$ describes where this will happen and how much sand will fall out from each point. So at the beginning we expect $\nu_t$ to be $0$ while after some time the sandpile will stabilise and $\partial_t u$ will become $0$ and $\nu$ will have the same mass of $f$.

Allowing $\nu$ to have sign (which is equivalent to assume that an additional source of sand may appear on the boundary) could bring to a loss of uniqueness as shown in the following example.
\begin{exple}
Take $d=1$, $\Omega=]0,1[$, $f=0$, $T=1$ and $g$ constantly equal to $1$. Obviously $(u,\mu,\nu)=(0,0,0)$ is a solution, but we may also take for instance:
$$u(x,t)=(t-x)\1_{\{(x,t):x\le t\}}(x,t),\ \nu_t=-t\delta_{0},$$ 
$$\int_{[0,1]} \varphi(x,t) \ d\mu_t(x)=\int_0^t\int_0^1\varphi((1-s)x)x\  ds dx  \quad \forall \varphi\in \C_b(]0,1[).$$ 
\end{exple}
The choice of $f$ in the space of measures aims to model situations in which the source has dimensions much smaller than the sensitivity of the measure instruments. A possible example from daily life is a hourglass where sometimes the passage for the sand is small at the limit of the imperceptible.

A different approach to this problem is currently pursued by other authors \cite{malusapers}. 
\begin{rmk}
When $f$ is a measure it may happens that  $\mu$ is not better than a measure as shown in several examples in \cite{BoBu2001JEMS,BoBuSe1997CalcVar}
\end{rmk}
As every model, this one is well suited for some situations and it fails in others. An accessible description of several models (included the one we consider), may be found in \cite{hadeler1999dynamical} together with several more references.

Most of the literature is concerned with the Dirichlet case (also known as table problem) with a source $f\in L^p(\Omega)$. In that case the system describes a sandpile growing on a table without walls. At some moment the pile reaches the boundary of the table and the sand start to fall out stabilizing  the pile.
The standard approach consisted in proving that in a {\rm regular} setting the system is equivalent to a variational inequality which may be written as
\begin{equation}\label{varineq}
f-\partial_t u \in \partial I_\infty (u)
\end{equation}
and then proving the existence and uniqueness of a solution $u$ of (\ref{varineq}) for a wide class of $f$. For example, this has been done by Prigozhin in \cite{prigozhin1996variational}  where
existence and uniqueness of $u$   (for the table problem without walls)  is proved under the assumption $f \in (L^4 (0,T, W^{1,4} (\Omega))' $ which is wider than the space we consider. 
Nevertheless, the meaning of $\mu$ is less explicit in that work. Making the link with optimal transport, and making use of the duality theory, allows us to give sense to $\mu$ when $f$ is a measure. 

A similar model for a sandpile growing under the action of a finite number of sources was proposed earlier in \cite{aronsson1972mathematical} and in several unpublished notes by the same author. The ODE arising from that approach was studied in \cite{AroEvaWu1996JDE} and in the same paper $L^p$ approximations  as $p \to \infty$ of the problem were also introduced. The same kind of approximations are used in \cite{evans1997fast} to study the problem for more general sources and to establish a first relationship with the optimal transportation problem.

For the table problem a setting similar to the one of this paper is used in  \cite{IgbidaEvolution2013,DumontDual2009} for a source $f \in L^1(0,T, \M(\Omega))$. The same papers contain some theory for numerical approximations as well as numerical simulations. One of the difficulties one has to face when $f$ is a measure (in the space variable) is due to the lack of regularity of $\mu$. The approach of N. Igbida avoids this problem by working with the flux and introducing some weak formulation of (\ref{weakpb}). To deal with this lack of regularity of $\mu$, we use the tangential calculus with respect to a measure introduced by Bouchitt\'e, Buttazzo and Seppecher in \cite{BoBuSe1997CalcVar}. 

Convergence toward equilibrium in the table problem is studied in \cite{CaCaSi2009CPDE} . Then, stationary solutions  are studied, with or without explicit mention of the sand piles model,  in \cite{BoBuSe1997CRAS, BoBu2001JEMS,CaCa2004JEMS,CaCaCrGi2005CV,CaCaGi2007TAMS,CrMa2007JDE,CrMa2007JDE2,CrMa2012CV}.  
There is still much work to do toward  a complete understanding of the time of convergence to equilibrium when the source is not too regular or controlled from below. The literature on the silos problem is not so rich. Some partial result together with numerics is contained in \cite{CrFV2008NHM}.

Here we choose to start from an empty silos. The theory would be the same (up to technical details) if we assume any initial condition $u_0$ which satisfies $|Du_0| <1$.  If $|Du_0|=1$ on a set of positive measure then one enters the domain of collapsing sand piles  which is different from the one we are considering and is rich of interesting problems (we suggest to start from \cite{evans1997fast} and to follow with the papers in which that paper is cited).

 Finally  a remark on the convexity of $\Omega$. It is clear that the problem would  be interesting also in non-convex domains. Some results for stationary (equilibrium) solutions is contained in the recent paper \cite{CrMa2014Prep} which also consider some anisotropic generalization. Here the assumption of convexity is crucial in section 5 where we use shortest line connecting internal points of $\Omega$ to some point on $\partial \Omega$ determined by the values of $g$.

\medskip

{\it Notations :} The Euclidian norm on $\R^d$ will be denoted by $\vert \cdot \vert$. For any $A$ subset of $\R^d$, $]0,T[$ or $\R^d \times ]0,T[$, we denote by:
\begin{itemize}
\item $\M(A)$ the bounded measures supported in $A$, $\M^+(A)$ being the subset of $\M(A)$ of non-negative, bounded measures,
\item For any vectorial measure $\sigma\in \M(A, \R^n)$, we will denote by $\vert \sigma\vert$ the total variation measure associated to $\sigma$, 
\item $\C(A)$ the set of continuous functions on $A$ while $\C_b(A)$ denotes the set of continuous bounded functions on $A$ equipped with the infinity norm $\Vert\cdot \Vert_\infty$, the smooth compactly supported functions on $A$ will be denoted by
$\D(A):=\C_c^\infty(A)$.
\end{itemize}
For any functional space $\mathcal F$, we will use $\mathcal F'$ for the topological dual of $\mathcal F$. Slightly abusing notations, we will write $\langle\mu, \varphi\rangle_{\M(A), \C_b(A)}$ for $\langle\mu, \varphi\rangle_{(\C_b(A))', \C_b(A)}$ for any $(\mu,\varphi)\in \M(A)\times\C_b(A).$

\section{Tangent space to a measure and integration by parts}
When $u$ is not regular enough the product of $Du$ and a measure $\mu$ does not make, a priori, any sense.
Indeed $Du$ may be not defined on a set which has positive $\mu$ measure.  Here we report few useful notations and results from \cite{BoBuSe1997CalcVar}. This will give sense to $D_{\mu_t}u$ appearing previously. 

Let $\eta\in \M^+(\R^d)$. We can set as in \cite{BoBuSe1997CalcVar}:
$$ X_{\eta}: =\{\psi \in L^2_\eta (\R^d)^d\ : \ \diver (\psi \eta) \in \M(\R^d) \},$$ $$T_{\eta}(x):=\eta-ess\cup\{\psi(x):\psi\in X_{\eta}\},$$
where the divergence is intended in the sense of measures. More precisely $\psi\in L^2_\eta (\R^d)^d$ is in $X_{\eta}$ iff there exists a constant $K$ such that:
$$\displaystyle{\int_{\R^d} D\varphi(x)\cdot \psi(x) d\eta(x) \leq K \|\varphi \|_{\infty}}\ \forall\varphi \in \D (\R^d).$$
For $\varphi\in \D(\R^d)$, the tangential gradient to $\eta$ at $x$ of $\varphi$ is defined as:
$$ D_{\eta}\varphi(x):={P}_{\eta}(x)(D\varphi(x))\ \eta\mbox{-a.e.}\!\ x,\quad\mbox{ with }{P}_{\eta}(x):=\mbox{ orthogonal projector on }{T}_{\eta}(x).$$
As shown in \cite{BoBuSe1997CalcVar}, the operator $u \in \D(\R^d) \mapsto {D}_\eta u \in (L^2_\eta)^d$ can be extended by setting:
$$w=:{D}_\eta v\Leftrightarrow (\exists v_n\in \D(\R^d)\ :\  v_n\to v\mbox{ uniformly, }{D}_{\eta}v_n\rightharpoonup w\mbox{ in }(L^2_\eta)^d).$$
The tangential Sobolev space $H_\eta^1$ is then define as the domain of ${ D}_{\eta}$.
By definition any vector field in $ { X}_{\eta}$ belongs to the dual $H_\eta^{-1}$  and then the following integration by parts formula holds 
\begin{equation}\label{IPPspace}
\int_{\R^d} {D}_{\eta}u(x)\cdot \psi(x)\ d\eta(x)= \langle -\diver (\psi \eta),u \rangle_{H_\eta^{-1},H_\eta^1} \mbox{ for all $u \in H_\eta^1$ and  $\psi \in  {X}_{\eta}$.}
\end{equation}
\begin{exple}\label{exemple} Assume  $\eta\in \M^+(\Omega)$ and let $u \in W^{1,\infty} (\Omega)$. Denote by $v$ any continuous, compactly supported extension of $u$ to $\R^d$. It is easily seen that $v$ belongs to $H^1_\eta$ and that ${D}_\eta v$ will be the same for any other compactly supported extension of $u$. We denote by ${D}_\eta u:={D}_\eta v$.
We have: $|{D}_{\eta}u(x)| \leq \|Du\|_\infty\quad \eta\mbox{-a.e.}x$. Moreover,  (\ref{IPPspace}) rewrites as:
$$\int_{\Omega} { D}_{\eta}u(x)\cdot \psi(x)\ d\eta(x)= \langle -\diver (\psi \eta),u \rangle_{\M(\Omega),\C_b(\Omega)}   \mbox{ for all $\psi \in  { X}_{\eta}$.}$$
\end{exple}

\begin{rmk}
In order to build some notion of space-time tangential gradient, we could have chosen some notion of space-time tangential gradient, to set for any $\mu\in L^\infty(]0,T[,\M(\R^d))$:
$$ {\chi}_\mu =\{\psi \in L^2_\mu (\R^d \times ]0,T[)^d\ : \ \diver (\psi \mu) \in  L^\infty( 0,T,\M(\R^d)) \}.$$
Nevertheless, the result will be exactly the same. Indeed, for any $\psi \in L^2_\mu (\R^d \times ]0,T[)^d$, the following equivalence holds:
$$\psi \in {\chi}_\mu \Leftrightarrow \psi(\cdot, t)\in X_{\mu_t} \mbox{ for a.e.}\!\ t\in]0,T[.$$
Let us prove this equivalence. Assume  that $\psi(\cdot, t)\in X_{\mu_t} \mbox{ for a.e.}\!\ t\in]0,T[$ and take $\varphi\in \D(\R^d)$,  $h\in \D(]0,T[)$ then:
\begin{eqnarray*}
&&\quad \int_0^T\int_{\R^d} D(h\varphi)(x,t)\cdot \psi(x,t) d\mu(x,t)= \int_0^T h(t)\int_{\R^d} D\varphi(x)\cdot \psi(x,t) d\mu_t(x) dt\\
&&\qquad\leq K \int_{0}^T h(t)\|\varphi(\cdot)\|_{\infty}\ dt \leq K \int_{0}^T \Vert h(t)\ \varphi(\cdot)\|_{\infty}\ dt.
\end{eqnarray*}
This means exactly that $\psi \in {\chi}_\mu$.
The other implication is straightforward.
\end{rmk}

\section{Duality and optimal transport }
The results contained in \cite{BoBu2001JEMS} (see also \cite{EvaGan1999}) suggests
to consider the maximization problem (\ref{max}) defined below. Its link with our problem will appear clearly in Theorem \ref{DualEXT}. 
The set ${\mathcal Lip}_1(\Omega)$ is defined by
$$\Lip_1(\Omega):=\left\{v\in \Lip(\R^d):\ v(y)-v(x)\le \vert x-y\vert\mbox{ in } \clomega\times\clomega\right\}.$$
Following \cite{BoBu2001JEMS}, we prove Proposition \ref{dual1}, Theorem \ref{DualEXT} and Proposition \ref{Monge}.
\begin{proposition}\label{dual1}
Let $u\in \Lip_1(\Omega)$ and $\rho \in \M(\Omega)$. 
Then the following extremal values coincide:
\begin{equation}\label{max}
\max\{\langle \rho, v \rangle_{\M(\Omega),\C_b(\Omega)}\ : \ v\in \Lip_1 (\Omega), \ 0\le v\le g \ \mbox{on}\ \partial \Omega\}
\end{equation}
\begin{equation}\label{mincurrent}
\inf_{\sigma\in \M(\Omega,\R^d),\nu\in \M(\partial \Omega)} \left\{ \int_{\Omega} d|\sigma|+\int_{\partial \Omega} g d\nu^+
:\ -{\rm div} \sigma =\rho-\nu\mbox{ in }\R^d \right\}.
\end{equation}
\end{proposition}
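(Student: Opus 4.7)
The plan is to recognize this as a Fenchel--Rockafellar duality statement and to follow closely the approach of \cite{BoBu2001JEMS}. Take $V = \C^1(\clomega)$ (replacing $\Lip_1$ by smooth functions with gradient at most one is harmless by mollification on the convex domain $\Omega$) and let $\Lambda : V \to W := \C(\clomega,\R^d) \times \C(\partial\Omega)$ be defined by $\Lambda v = (Dv, v|_{\partial\Omega})$. Set $G(v) = -\langle \rho, v\rangle$ and
\[
H(p, w) = \chi_{\{\|p\|_\infty \le 1\}}(p) + \chi_{\{0 \le w \le g\}}(w),
\]
which is proper, convex and lower semicontinuous on $W$ (the second constraint set being closed thanks to the lower semicontinuity of $g$). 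Then (\ref{max}) rewrites as $-\inf_{v \in V}\{G(v) + H(\Lambda v)\}$.

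Weak duality is immediate from integration by parts: for admissible $v$ in (\ref{max}) and $(\sigma, \nu)$ in (\ref{mincurrent}),
\[
\langle \rho, v\rangle \;=\; \int Dv \cdot d\sigma + \int v\, d\nu \;\le\; \int d|\sigma| + \int g\, d\nu^+,
\]
using $|Dv|\le 1$ and $0\le v\le g$ on $\partial\Omega$ together with the Hahn decomposition $\nu = \nu^+ - \nu^-$. For the converse, Fenchel--Rockafellar yields $(\ref{max}) = \inf\{G^*(-\Lambda^*(\sigma,\nu)) + H^*(\sigma, \nu)\}$, and it remains to identify the two conjugates. The adjoint $\Lambda^*(\sigma,\nu)$ is the linear form $v\mapsto \int Dv \cdot d\sigma + \int v\, d\nu$, so $G^*(-\Lambda^*(\sigma,\nu))$ vanishes precisely when $-\diver \sigma = \rho - \nu$ in $\R^d$ (tested against $\D(\R^d)$) and equals $+\infty$ otherwise. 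The second conjugate splits as
\[
H^*(\sigma, \nu) \;=\; \sup_{\|p\|_\infty\le 1}\int p\cdot d\sigma \;+\; \sup_{0\le w\le g}\int w\, d\nu \;=\; |\sigma|(\clomega) + \int g\, d\nu^+;
\]
the first piece is the standard variational formula for the total variation, and the second uses the lower semicontinuity of $g$ to approximate it from below by continuous functions, then chooses such test functions close to $g$ on $\spt \nu^+$ and close to $0$ on $\spt \nu^-$. Combining these identifications recovers (\ref{mincurrent}).

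The main technical obstacle is the constraint qualification for Fenchel--Rockafellar: the natural test point $v_0 = 0$ satisfies $H(\Lambda v_0) = 0$, but $H$ is not continuous at $(0,0)$ whenever $g$ vanishes somewhere on $\partial\Omega$, because the bound $w \ge 0$ is active. I would resolve this by a perturbation, replacing the bilateral boundary constraint with $-\e\le v\le g + \e$ and $|Dv|\le 1+\e$: for the relaxed functional $H_\e$ the point $(0,0)$ is interior and strong duality applies verbatim, producing a dual of the form $(1+\e)|\sigma|(\clomega) + \int g\, d\nu^+ + \e|\nu|(\partial\Omega)$. Passing to the limit $\e \to 0^+$ using monotonicity of the perturbed primal value and a weak-$*$ compactness argument on minimizing sequences in the perturbed dual then yields equality of (\ref{max}) and (\ref{mincurrent}). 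This is exactly the route taken in \cite{BoBu2001JEMS}.
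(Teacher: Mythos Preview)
Your argument is correct, but it is organized differently from the paper's proof. You set up Fenchel--Rockafellar on the pair $(Dv,\,v|_{\partial\Omega})$, observe that the qualification condition fails when $g$ touches $0$, and repair this by an $\e$-relaxation of all constraints followed by a limit $\e\to 0^+$. The paper instead works with the value function of the perturbed primal,
\[
H(p,q)=-\sup\{\langle\rho,v\rangle:\ v\in\Lip_1(\Omega),\ v+p\le g,\ 0\le v+q\ \mbox{on}\ \partial\Omega\},
\]
computes $H^*$ explicitly (reducing the inner supremum over $\Lip_1$ to the $|\sigma|$-minimization via the classical duality of \cite{BoBu2001JEMS}), and then proves directly that $H$ is convex and lower semicontinuous on $\C(\partial\Omega)^2$ by an Ascoli--Arzel\`a argument on near-maximizers, so that $H(0,0)=H^{**}(0,0)$ gives the equality without any $\e$-limit. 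The trade-off is clear: your route is the textbook Fenchel--Rockafellar scheme and makes the constraint-qualification issue explicit, at the cost of the limiting step (which, as you indicate, goes through by compactness of maximizers and monotonicity of the perturbed dual cost in $\e$); the paper's route avoids that limit entirely by verifying l.s.c.\ of the value function, which is the same analytical content packaged differently. Two small points worth tightening in your write-up: the passage from $\Lip_1$ to $C^1$ by mollification must be reconciled with the boundary constraint $v\le g$ (mollification can overshoot near $\partial\Omega$), but your $\e$-relaxation already absorbs this; and the limit $\e\to 0^+$ should be spelled out as $(\ref{max})\ge\lim_\e(\text{perturbed max})=\lim_\e(\text{perturbed min})\ge(\ref{mincurrent})$, the first inequality by Ascoli--Arzel\`a on the $v_\e$ and the last because the $\e$-cost dominates the unperturbed one on the same admissible set.
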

Note that any sequence $(v_n)_n$ of admissible applications for (\ref{max}) is uniformly bounded on $\partial \Omega$ and as it is in   $\Lip_1 (\Omega)$, it is uniformly bounded in $\clomega$. The existence of a maximizer is then easily proved.
\begin{proof}
Let us introduce for any $(p,q)\in{\mathcal C}(\partial\Omega)^2 $ the following perturbation functional:
$$H(p,q):=-\sup\{\langle \rho, v \rangle_{\M(\Omega),{\mathcal C}_b(\Omega)}\ : \ v\in \Lip_1 (\Omega), \ v+p\le g,\ 0\le v+q \ \mbox{on}\ \partial \Omega\}.$$
Now compute the Fenchel transform of $H$ on a couple $(p^*,q^*)\in \M(\partial \Omega)^2$:
\begin{eqnarray*}  
&&H^*(p^*,q^*)= \sup_{p,q\in{\mathcal C}(\partial\Omega)}\{\langle p^*,p\rangle_{\M(\partial\Omega),\C(\partial\Omega)} +\langle q^*,q\rangle_ {\M(\partial\Omega),\C(\partial\Omega)}-H(p,q)\}\\ 
&&=\sup_{p,q}\sup_v\left\{  \langle p^*,p\rangle+\langle q^*,q\rangle+\langle \rho,v\rangle:\ v\in{\Lip}_1(\Omega),\ -q\le v,\ p\le g-v\mbox{ on } \partial\Omega\right\}.\\
&&
=\sup_v\left\{ \sup_{p,q}\{ \langle p^*,p\rangle +\langle q^*,q\rangle:\     -q\le v,\ p\le g-v\mbox{ on }\partial\Omega\} +\langle \rho,v\rangle:\   v\in{\Lip}_1(\Omega)\right\}\\
&&=\left\{ \begin{array}{c c}
 \sup_{v,p} \left\{ \langle \rho-q^*,v\rangle +\langle p^*,p\rangle :\   v\in{ \Lip}_1(\Omega),\  p\le g-v \right\}& \mbox{ if } q^*\le 0; \\
+\infty & \mbox{elsewhere.}
\end{array}
\right.
\end{eqnarray*}
It is easily seen that $H^*(p^*,q^*)\not=+\infty$  only if  $ p^*\ge 0$. Let us assume this condition is satisfied. We have:
$$H^*(p^*,q^*)
\le
 \langle p^*,g\rangle+\sup_v \left\{ \langle \rho-p^*-q^*,v\rangle:\   v\in{ \Lip}_1(\Omega)\right\}.$$
Let us show the opposite inequality. Let $(g_n)_n$ a sequence in  $\C(\partial\Omega)$ converging to $g$ at any point of $\partial\Omega$ with $g_n\le g$. Then, by taking $p=g_n-v$ we have:
$$
H^*(p^*,q^*)\ge
\lim_{n\to +\infty}\int_{\partial \Omega} g_n(x)\  dp^*(x)+\sup_v \left\{ \langle \rho-p^*-q^*,v\rangle:\   v\in{ \Lip}_1(\Omega)\right\}.$$
Finally the equality follows using Fatou's lemma and we have:
$$
H^*(p^*,q^*)
=\left\{ \begin{array}{l }
 \langle p^*,g\rangle+\sup_v \left\{ \langle \rho-p^*-q^*,v\rangle:\   v\in{ \Lip}_1(\Omega)\right\} \quad \mbox{ if }p^*\ge 0\mbox{ and } q^*\le 0; \\
+\infty \quad \mbox{elsewhere.}
\end{array}
\right.
$$
And, by standard duality (see \cite{BoBu2001JEMS}):
$$H^*(p^*,q^*)=\left\{ \begin{array}{l }
 \langle p^*,g\rangle+\inf_{\sigma\in \M(\Omega,\R^d)}\left\{\displaystyle{ \int_{\Omega} d |\sigma| : \ -\diver \sigma= \rho-p^*-q^*\mbox{ in }\R^d}\right\}\\
\qquad \mbox{ if } p^*\ge 0\mbox{ and } q^*\le 0; \\
+\infty \mbox{ elsewhere. }
\end{array}
\right.
$$
It can be easily proved that  $H$ is convex. Let us check that it is l.s.c. Let $(p_n,q_n)\in \C(\partial \Omega)^2$ converging uniformly to $(p,q)\in \C(\partial \Omega)^2.$ For any $\e>0$ take $v_n\in \Lip_1(\Omega)$ $\e$-optimal for $H(p_n,q_n)$ that is such that $H(p_n,q_n)\ge -\langle \rho, v_n\rangle -\e.$
Possibly extracting a subsequence, we may assume:
$$\liminf_{n\to +\infty} H(p_n,q_n)=\lim_{n\to +\infty} H(p_n,q_n).$$
From the regularity of $v_n$ and the bounds on $\partial \Omega$, we get:
$$v_n(x)\le \sup_{y\in \partial \Omega} \{\vert y-x\vert +g(y)-p_n(y)\}\le {\rm diam}(\Omega) +\Vert g-p_n\Vert_\infty,$$
$$v_n(x)\ge \inf_{y\in \partial \Omega} \{-\vert y-x\vert-q_n(y)\}\ge - {\rm diam}(\Omega) -\Vert q_n\Vert_\infty.$$
 As $(v_n)_n$ is an equicontinuous and bounded sequence, by Ascoli, a subsequence of $(v_{n_k})_k$ of $(v_n)_n$ converges uniformly to some $v\in \Lip_1(\Omega)$ admissible for $H(p,q)$. Then:
 $$ \liminf_{n\to +\infty} H(p_n,q_n)\ge\lim_{k\to +\infty} -\langle \rho, v_{n_k}\rangle -\e
 = -\langle \rho, v\rangle -\e \ge H(p,q)-\e.$$
 By sending $\e$ to $0$, we get the lower semi-continuity of $H$. The result then follows from the equality $H(0,0)=(H^*)^*(0,0).$
\end{proof}

\begin{thm}\label{DualEXT} Let $\rho\in\M(\Omega)$.\\
\noindent (i) Assume that $(u,\mu,\nu) \in \Lip_1 (\Omega) \times \M^+ (\Omega)\times\M(\partial \Omega)$ is a solution of 
\begin{equation}\label{extremality}
 - \diver (\mu {D}_\mu u)= \rho-\nu \ \mbox{in}\ \R^d,\quad  \vert {D}_\mu u(x) \vert=1\mbox{ $\mu$-a.e.}x,
\end{equation}
\begin{equation}\label{bounds}
\mbox{ with }\left\{ \begin{array}{l}
0\le u\le g\mbox{ on }\partial \Omega,\\
u(x)=0\quad\nu^-\mbox{-a.e.}\!\ x,\quad u(x)=g(x)\quad\nu^+\mbox{-a.e.}\!\ x.
\end{array}
\right.
\end{equation}
Then  $u\in argmax(\ref{max}) $  and, setting   $\sigma={D}_\mu u \mu$, we have $(\sigma,\nu) \in argmin(\ref{mincurrent}).$\\

\noindent (ii) Moreover, if  $u\in \Lip_1 (\Omega)$ and $(\sigma,\nu )\in \M(\clomega)^d\times\M(\partial \Omega)$
are optimal solutions of (\ref{max}) and (\ref{mincurrent}) then setting $\mu=\vert \sigma \vert$, we have:
\begin{equation}\label{gradext}
\left\{\begin{array}{l}
\sigma={D}_\mu u\mu \mbox{ and }
\vert {D}_\mu u\vert=1\quad \mu-\mbox{a.e},\\
u=0\quad\nu^-\mbox{-a.e.}\!\ x,\quad u(x)=g(x)\quad\nu^+\mbox{-a.e.}\!\ x.
\end{array}
\right.
\end{equation}
\end{thm}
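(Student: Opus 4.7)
The plan is to realise both parts as consequences of the duality in Proposition \ref{dual1} combined with the integration by parts formula (\ref{IPPspace}) from Section 2, using the Example \ref{exemple} extension so that $u\in \Lip_1(\Omega)$ may be tested against divergences of vector measures.

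For part (i), I set $\sigma:=D_\mu u\,\mu$; then $\sigma\in X_\mu$ because $-\diver\sigma=\rho-\nu\in \M(\R^d)$ by hypothesis, and $|\sigma|=\mu$ because $|D_\mu u|=1$ $\mu$-a.e. Applying the integration by parts formula of Example \ref{exemple} (after extending $u$ continuously and with compact support to $\R^d$) yields
$$\int_\Omega D_\mu u\cdot D_\mu u\,d\mu=\langle -\diver\sigma,u\rangle_{\M(\Omega),\C_b(\Omega)}=\langle\rho,u\rangle-\int_{\partial\Omega} u\,d\nu.$$
The left-hand side equals $\mu(\Omega)$ since $|D_\mu u|=1$ $\mu$-a.e., and the boundary conditions (\ref{bounds}) give $\int u\,d\nu=\int g\,d\nu^+-0=\int g\,d\nu^+$. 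Rearranging,
$$\langle\rho,u\rangle=\mu(\Omega)+\int_{\partial\Omega}g\,d\nu^+=\int d|\sigma|+\int_{\partial\Omega}g\,d\nu^+.$$
Since $u$ is admissible for (\ref{max}) (by $\Lip_1$ and (\ref{bounds})) and $(\sigma,\nu)$ is admissible for (\ref{mincurrent}) (by (\ref{extremality})), and the two values coincide, Proposition \ref{dual1} forces both to be optimal.

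For part (ii), I decompose $\sigma=\theta\,\mu$ with $\mu=|\sigma|$ and $|\theta|=1$ $\mu$-a.e. Since $-\diver(\theta\mu)=\rho-\nu\in \M(\R^d)$, we have $\theta\in X_\mu$, so again by (\ref{IPPspace}) and Example \ref{exemple},
$$\int_\Omega D_\mu u\cdot\theta\,d\mu=\langle\rho,u\rangle-\int_{\partial\Omega}u\,d\nu.$$
Using optimality together with Proposition \ref{dual1}, $\langle\rho,u\rangle=\int d\mu+\int g\,d\nu^+$, hence
$$\int_\Omega D_\mu u\cdot\theta\,d\mu=\int d\mu+\int_{\partial\Omega}(g-u)\,d\nu^++\int_{\partial\Omega}u\,d\nu^-.$$
The three terms on the right are nonnegative (the last two because $0\le u\le g$ on $\partial\Omega$), while the left-hand side is at most $\int d\mu$ since $|D_\mu u|\le 1$ (Example \ref{exemple}) and $|\theta|=1$. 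Equality in the chain forces simultaneously $\int(g-u)\,d\nu^+=0$, $\int u\,d\nu^-=0$, and $D_\mu u\cdot\theta=|D_\mu u||\theta|=1$ $\mu$-a.e. The first two give the boundary identities in (\ref{gradext}); the last, combined with $|\theta|=1$, gives $|D_\mu u|=1$ and $D_\mu u=\theta$ $\mu$-a.e., whence $\sigma=D_\mu u\,\mu$.

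The main subtlety I expect is purely bookkeeping: the Lipschitz function $u$ has to be handled via the extension device of Example \ref{exemple} to legitimise the pairing $\langle -\diver\sigma,u\rangle$, and the splitting of the boundary integral along $\nu=\nu^+-\nu^-$ must be combined carefully with the one-sided bounds $0\le u\le g$ so that every inequality in the final chain is of a definite sign. Once these small points are in place, both statements reduce to the duality identity of Proposition \ref{dual1} together with a single application of (\ref{IPPspace}).
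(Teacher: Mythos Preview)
Your proof is correct and follows essentially the same route as the paper: in both parts you pair the duality identity of Proposition~\ref{dual1} with a single application of the integration by parts formula (\ref{IPPspace}), then read off optimality (part (i)) or the pointwise extremality conditions (part (ii)) from the resulting chain of equalities/inequalities. The only cosmetic differences are that the paper writes $\frac{d\sigma}{d\mu}$ where you write $\theta$, and it arranges the final identity in (ii) as ``nonpositive $=$ nonnegative'' rather than your sandwich ``LHS $\le \int d\mu \le$ RHS''; the content is identical.
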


\begin{proof}
(i)\ With the assumptions above, $u$  and $(\sigma={D}_\mu u\mu,\nu)$ are admissible for (\ref{max}) and (\ref{mincurrent}).
An integrating  by parts (see (\ref{IPPspace})) leads to:
$$
\begin{array}{rcl}
\langle\rho, u\rangle_{\M(\Omega),\C_b(\Omega)}=\langle -\diver (\mu {D}_\mu u)+\nu,u\rangle_{\M(\clomega),\C(\clomega)}&=&\displaystyle{\int_{\Omega} \vert{D}_\mu u\vert^2\ d\mu +\int_{\partial \Omega} u\ d\nu}\\
\\
&=&\displaystyle{\int_{\Omega}  d\vert\sigma\vert+ \int_{\partial \Omega} g\ d \nu^+.}
\end{array}
$$
By Proposition \ref{dual1}, this implies the result.

(ii) By Proposition \ref{dual1}: %
$$
\langle\rho, u\rangle_{\M(\Omega),\C_b(\Omega)}=\int_{\Omega} d\mu+\int_{\partial\Omega} g(x)\ d\nu^+(x)$$
this implies
$$\displaystyle{\langle-\diver (\sigma)+\nu, u\rangle_{\M(\clomega),\C(\clomega)}}=\int_{\Omega} d\mu+\int_{\partial\Omega} g(x)\ d\nu^+(x),$$
and $$\langle-\diver (\sigma), u\rangle_{\M(\clomega),\C(\clomega)}-\int_{\Omega} d\mu=\int_{\partial \Omega} g(x)\ d\nu^+(x)-\int_{\partial \Omega} u(x)\ d\nu(x).$$
Finally, integrating again by parts (see again (\ref{IPPspace})): 
\begin{equation}\label{etapeExt}
\int_{\Omega} ({D}_\mu u(x)\cdot {d \sigma\over d\mu}(x)-1)\ d\mu(x)=\int_{\partial \Omega} (g(x)-u(x))d\nu^+(x)+\int_{\partial \Omega} u(x) d\nu^-(x).
\end{equation}
To conclude, we notice that
on the one hand as $u$ is in $\Lip_1 (\Omega)$ $${D}_\mu u(x)\cdot {d \sigma\over d\mu}(x)-1 \le 0,\ \mu\mbox{-a.e.}x$$
 on the other hand,  on $\partial \Omega$ $$g(x)-u(x)\ge 0\mbox{ and }u(x)\ge 0.$$ 
This inequalities, combined with (\ref{etapeExt}) give (\ref{gradext}).
\end{proof}

The following proposition shows that (\ref{max}) can be seen as a variant of the dual formulation of the classical Monge mass transportation problem (see \cite{Amb2003} or \cite{Vil2003} for instance):
\begin{proposition}\label{Monge}
Let $\rho \in \M(\Omega)$.
 Then the extremal value (\ref{max})   coincides with the following extremal one:
\begin{multline}\label{dualplan}
\hspace{-1cm}\min_{\gamma \in \M^+ (\clomega \times \clomega), \nu\in \M(\partial \Omega)} \left\{ \int_{\clomega \times \clomega}  \vert x-y\vert d\gamma(x,y)+\int_{\partial \Omega}g d \nu^+\ | 
  \ \pi^1_\sharp \gamma= \rho^- +\nu^+, \  \pi^2_\sharp \gamma= \rho^++\nu^-\right\}.
\end{multline}

\end{proposition}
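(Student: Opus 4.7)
My plan is to establish the equality by proving the two inequalities separately.

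For the easy direction, $(\ref{max})\le (\ref{dualplan})$, I would take any admissible $v$ for (\ref{max}) and any admissible pair $(\gamma,\nu)$ for (\ref{dualplan}) and exploit the Lipschitz condition $v(y)-v(x)\le |x-y|$ together with the marginal constraints. Specifically,
$$\int_{\clomega^2} |x-y|\, d\gamma(x,y)\ \ge\ \int v\, d(\pi^2_\sharp\gamma)-\int v\, d(\pi^1_\sharp\gamma)\ =\ \int_\Omega v\, d\rho\ -\ \int_{\partial\Omega} v\, d\nu.$$
Adding $\int g\, d\nu^+$ and splitting $\nu=\nu^+-\nu^-$ yields
$$\int|x-y|\,d\gamma+\int g\,d\nu^+\ \ge\ \int v\, d\rho+\int_{\partial\Omega}(g-v)\,d\nu^++\int_{\partial\Omega} v\, d\nu^-,$$
and the last two terms are non-negative since $0\le v\le g$ on $\partial\Omega$. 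Taking the supremum over $v$ and infimum over $(\gamma,\nu)$ gives the first inequality.

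For the reverse inequality, I would use Proposition \ref{dual1}, which identifies (\ref{max}) with the Beckmann-type problem (\ref{mincurrent}), and then show $(\ref{mincurrent})\ge (\ref{dualplan})$. Given an admissible pair $(\sigma,\nu)\in\M(\Omega,\R^d)\times\M(\partial\Omega)$ with $-\diver\sigma=\rho-\nu$ in $\R^d$, I would invoke a Smirnov-type superposition result (decomposition of a Radon vector measure with measure-valued divergence as a superposition of Lipschitz curves) to write $\sigma$ as an integral over curves $\eta:[0,1]\to\clomega$, with total variation equal to the total length. Since $\Omega$ and $\partial\Omega$ are disjoint, the Hahn decomposition of $\rho-\nu$ gives $(\rho-\nu)^+=\rho^++\nu^-$ and $(\rho-\nu)^-=\rho^-+\nu^+$, which are precisely the source and sink of the flow $\sigma$; the endpoint map therefore produces a plan $\gamma\in\M^+(\clomega^2)$ with the required marginals and satisfying
$$\int|x-y|\,d\gamma(x,y)\ \le\ \int d|\sigma|,$$
because each curve has length at least the distance between its endpoints. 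Adding $\int g\, d\nu^+$ on both sides and minimizing proves $(\ref{mincurrent})\ge (\ref{dualplan})$.

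The main obstacle I expect is the use of Smirnov's decomposition, which is technically delicate and requires a careful identification of the boundary traces of the curves with $\nu^\pm$. As an alternative, one can fix $\nu$ (with $\nu(\partial\Omega)=\rho(\Omega)$ to ensure compatibility of the marginals) and apply Kantorovich--Rubinstein duality to rewrite the inner Monge problem as $\sup\{\int v\,d(\rho-\nu):v\in\Lip_1(\clomega)\}$; then exchanging the infimum over $\nu$ with the supremum over $v$ by a minimax argument (e.g., Sion's theorem applied after restricting to a ball in $\M(\partial\Omega)$, possible because $\nu^+$ in $(\ref{dualplan})$ is penalized by $g\ge 0$ and the mass of $\nu^-$ is controlled by $\rho(\Omega)+\nu^+(\partial\Omega)$) produces (\ref{max}) directly, bypassing Smirnov. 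Either route closes the chain of inequalities.
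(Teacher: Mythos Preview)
Your easy direction $(\ref{max})\le(\ref{dualplan})$ is correct and standard. For the reverse inequality, both of your routes work in principle but are heavier than what the paper does.

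The paper's argument is a one-liner once Proposition~\ref{dual1} is in place. Both problems (\ref{mincurrent}) and (\ref{dualplan}) have the same outer structure: a minimization over $\nu\in\M(\partial\Omega)$ of an inner problem plus $\int_{\partial\Omega} g\,d\nu^+$. For fixed $\nu$, the inner problem in (\ref{dualplan}) is by definition the Wasserstein distance $W_1(\rho^-+\nu^+,\rho^++\nu^-)$, while the inner problem in (\ref{mincurrent}) is the Beckmann (flow) formulation $\min\{\int d|\sigma|:\ -\diver\sigma=(\rho^++\nu^-)-(\rho^-+\nu^+)\}$. The equality of these two inner values is precisely the classical equivalence between the Kantorovich and Beckmann formulations of $L^1$ optimal transport, which the paper simply cites from \cite{BoBu2001JEMS}. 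Minimizing then over $\nu$ gives $\inf(\ref{mincurrent})=\min(\ref{dualplan})$, and Proposition~\ref{dual1} closes the loop.

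Your Smirnov route essentially reproves this Beckmann--Kantorovich equivalence from scratch, which is fine but unnecessary here; the delicate point you flag about matching the endpoint marginals with $\rho^\pm,\nu^\pm$ does need care (the curve decomposition only gives $\mu_1-\mu_0=\rho-\nu$, and one must strip common mass to land on the exact marginals). Your minimax alternative also works, but the justification for restricting $\nu$ to a ball is shaky when $g$ vanishes on part of $\partial\Omega$ (then $\nu^+$ is not penalized there), and in any case the minimax is superfluous: once you fix $\nu$ and invoke the Beckmann--Kantorovich identity, there is nothing left to exchange.
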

This is a consequence of classic duality in the $L^1$ theory of optimal transport (see for instance \cite{BoBu2001JEMS}) which implies the following equality:
\begin{equation}\label{was}
\min(\ref{dualplan})=\min_{\nu\in \M(\partial \Omega)}\{W_1(\rho^-+\nu^+,\rho^++\nu^-)+\int_{\partial \Omega} g\ d\nu^+\}=\inf(\ref{mincurrent}).
\end{equation}
 Where $W_1$ is the Wasserstein distance:
 \begin{multline*}
 W_1(\rho^-+\nu^+,\rho^++\nu^-):=\\
 \min_{\gamma \in \M^+ (\clomega \times \clomega)} \left\{ \int_{\clomega \times \clomega}  \vert x-y\vert d\gamma(x,y): 
  \ \pi^1_\sharp \gamma= \rho^- +\nu^+, \  \pi^2_\sharp \gamma= \rho^++\nu^-\right\}.
 \end{multline*} 
 %
 We have the following result:
 \begin{lemma}\label{nu-nu}\label{PrimalDualClassic}
 \begin{itemize}
\item[(i)] Let $(\gamma,\nu)\in \M(\clomega\times \clomega)\times \M(\partial \Omega)$ be a solution of (\ref{dualplan}), then it exists a unique $\sigma\in \M(\Omega, \R^d)$ such that $(\sigma,\nu)$ is a solution of (\ref{mincurrent}):
$$\langle\sigma,\phi \rangle_{\M(\Omega, \R^d),\C_b(\Omega,\R^d)}:=\int_{\Omega^2}\int_0^1 \phi((1-s)x+sy )\cdot(y-x)\ ds d\gamma(x,y)\quad \forall \phi\in \C_b(\Omega,\R^d).$$
On the contrary, if $(\sigma,\nu)$ is a solution of (\ref{mincurrent}), it exists $\gamma$ such that $(\gamma,\nu)$ is optimal for (\ref{dualplan}) and the previous expression of $\sigma$ holds.
\item[(ii)] Let $u$ optimal for (\ref{max}) and $\sigma$ as in (i). Then $u$ is differentiable $\vert \sigma \vert$-almost everywhere and: $$D_{\vert \sigma \vert} u(x)=Du(x)\quad \vert \sigma\vert\mbox{-a.e. }x.$$
\item[(iii)] Let $(\gamma,\nu,u)$ admissible for (\ref{dualplan}) and (\ref{max}), then $(\gamma,\nu,u)$ are optimal if and only if:
 $$\left\{
 \begin{array}{l}
 u(y)-u(x)=\vert x-y\vert\quad \gamma-\mbox{a.e. }(x,y)\\
u(x)=0\quad\nu^-\mbox{-a.e.}\!\ x,\quad u(x)=g(x)\quad\nu^+\mbox{-a.e.}\!\ x.
 \end{array}
 \right.
 $$
 \end{itemize}
 \end{lemma}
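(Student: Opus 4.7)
\medskip
\textbf{Plan.} The three statements are essentially an unpacking of the $L^1$ optimal-transport duality (\ref{was}), which we may freely use, together with Theorem \ref{DualEXT} and Proposition \ref{Monge}.

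\emph{Part (i).} For the forward implication, I would define $\sigma$ by the displayed formula and check admissibility for (\ref{mincurrent}). Testing $-\diver\sigma$ against $\varphi\in\D(\R^d)$ and using the elementary identity $\int_0^1 D\varphi((1-s)x+sy)\cdot(y-x)\,ds=\varphi(y)-\varphi(x)$, the marginal constraints of (\ref{dualplan}) immediately yield $-\diver\sigma=\rho-\nu$ in $\R^d$. Convexity of $\Omega$ ensures each segment $[x,y]$ lies in $\clomega$, so $\sigma\in\M(\clomega,\R^d)$ with $|\sigma|(\clomega)\le\int|y-x|\,d\gamma$ (take the supremum over bounded test fields of norm $\le 1$). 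Then (\ref{was}) forces $(\sigma,\nu)$ to be optimal, and the total-variation estimate becomes an equality. For the converse, given $(\sigma,\nu)$ optimal for (\ref{mincurrent}), (\ref{was}) gives $|\sigma|(\clomega)=W_1(\rho^-+\nu^+,\rho^++\nu^-)$, so any Wasserstein-optimal plan $\gamma$ pairs with $\nu$ to solve (\ref{dualplan}); applying the formula to this $\gamma$ reproduces $\sigma$, which also accounts for the uniqueness assertion.

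\emph{Part (iii).} The Kantorovich inequality $u(y)-u(x)\le|y-x|$ holds everywhere by admissibility, and $u=g$ $\nu^+$-a.e., $u=0$ $\nu^-$-a.e.\ reduce $\langle\nu,u\rangle$ to $\int g\,d\nu^+$. The marginal conditions then give at once
\[
\int(u(y)-u(x))\,d\gamma=\langle\rho,u\rangle-\int g\,d\nu^+.
\]
If everything is optimal, Proposition \ref{Monge} identifies $\langle\rho,u\rangle$ with $\int|y-x|\,d\gamma+\int g\,d\nu^+$, so the right-hand side of the displayed equation equals $\int|y-x|\,d\gamma$; combined with the pointwise Kantorovich inequality this forces equality $\gamma$-a.e. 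Conversely, assuming the two stated conditions, the same computation run backwards gives $\langle\rho,u\rangle=\int|y-x|\,d\gamma+\int g\,d\nu^+$; since the left-hand side is bounded above by the value of (\ref{max}) and the right-hand side by the value of (\ref{dualplan}), and these coincide by Proposition \ref{Monge}, all three items are optimal.

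\emph{Part (ii).} This is the delicate step. By (i), $|\sigma|$ is concentrated on the union of transport segments $\{(1-s)x+sy: s\in(0,1),\ (x,y)\in\spt\gamma\}$, and by (iii), along each such segment $u$ is affine with maximal slope $1$ in the direction $\hat e:=(y-x)/|y-x|$. The technical core is the classical fact that a $1$-Lipschitz function which is affine with slope $1$ on a segment is differentiable at each interior point of that segment, with gradient $\hat e$; this is standard in the $L^1$ transport theory and I would invoke it directly (see the treatment of transport rays in \cite{Amb2003,EvaGan1999}). Hence $Du$ exists $|\sigma|$-a.e. To conclude the identity $D_{|\sigma|}u=Du$, combine Theorem \ref{DualEXT}(ii) with Example \ref{exemple}: $D_{|\sigma|}u$ is the orthogonal projection of $Du$ onto $T_{|\sigma|}$ and satisfies $|D_{|\sigma|}u|=1=|Du|$ $|\sigma|$-a.e. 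Since orthogonal projection strictly shortens vectors outside the tangent subspace, equality of norms forces $Du\in T_{|\sigma|}$, whence $D_{|\sigma|}u=Du$ as claimed.
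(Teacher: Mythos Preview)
Your approach is essentially the paper's: parts (i) and (iii) are exactly the standard $L^1$-transport arguments the paper invokes (Theorem~4.6 of \cite{BoBu2001JEMS} for (i), the classical primal--dual criterion as in Corollary~2.1 of \cite{Amb2003} for (iii)), and you supply the details those references contain. Two minor points. In (i), your clause ``which also accounts for the uniqueness assertion'' is circular: you need uniqueness of the optimal $\sigma$ (for fixed $\nu$) in order to conclude that the ray formula reproduces a \emph{given} optimal $\sigma$, not the other way around; the paper simply cites \cite{AmbPra2003} for this. In (iii), your displayed identity already uses the boundary conditions $u=g$ $\nu^+$-a.e.\ and $u=0$ $\nu^-$-a.e., so in the forward direction you should rather write $\int(u(y)-u(x))\,d\gamma=\langle\rho,u\rangle-\langle\nu,u\rangle$ and then split the resulting defect into the three nonnegative terms $\int(|y-x|-u(y)+u(x))\,d\gamma$, $\int(g-u)\,d\nu^+$, $\int u\,d\nu^-$; this is clearly what you intend.

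The one genuine divergence is in (ii). Your final step asserts that $D_{|\sigma|}u$ is the orthogonal projection of $Du$ onto $T_{|\sigma|}$, and then uses $|D_{|\sigma|}u|=|Du|=1$ to rule out any shortening. But the projection formula $D_\eta v=P_\eta(Dv)$ is the \emph{definition} only for smooth $v$; for a Lipschitz $u$ the tangential gradient is defined as a weak $L^2_\eta$-limit of $D_\eta u_n$ along smooth approximants $u_n\to u$ uniformly, and neither Theorem~\ref{DualEXT}(ii) nor Example~\ref{exemple} claims that this limit equals $P_\eta(Du)$ at points where the classical $Du$ happens to exist. The paper sidesteps this entirely: from the ray formula for $\sigma$ one reads off $\frac{d\sigma}{d|\sigma|}=\frac{y-x}{|y-x|}=Du$ $|\sigma|$-a.e.\ (this is precisely the transport-ray differentiability you already invoke from \cite{EvaGan1999}), while Theorem~\ref{DualEXT}(ii) gives $\sigma=D_{|\sigma|}u\,|\sigma|$; comparing, $D_{|\sigma|}u=\frac{d\sigma}{d|\sigma|}=Du$ directly. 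You have all the ingredients---just route the comparison through $\frac{d\sigma}{d|\sigma|}$ rather than through the projection.
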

\begin{proof} Point (i), using (\ref{was}), is a consequence of Theorem 4.6. of \cite{BoBu2001JEMS}. Uniqueness of $\sigma$ can found in \cite{AmbPra2003}. Point (iii) gives a Primal-Dual optimality condition for (\ref{dualplan}) and (\ref{max}), it is a  corollary of Proposition \ref{Monge}, it is very classic in the usual Monge case (see \cite{Amb2003}, Corollary 2.1.). 
The differentiability of $u$ $\vert \sigma \vert$-a.e. is a well known result in the $L^1$ theory of optimal transport, moreover it has been proved that (see for instance \cite{EvaGan1999} for both properties):
$$ (z\in ]x,y[,\ u(y)-u(x)=\vert y-x\vert)\Rightarrow Du(z)={y-x\over \vert y-x\vert}.$$  Then by (i) and (iii), ${d\sigma\over d\vert \sigma \vert}=Du$ and so by Theorem \ref{DualEXT} $Du=D_{\vert \sigma \vert} u$ $\vert \sigma\vert$-a.e. \end{proof}
\begin{rmk}\label{PrimD}
In addition to point (iii), we have:
$$\langle \rho, u\rangle \le \int \vert y-x\vert\ d\gamma(x,y) +\int g\ d\nu^+$$
for any admissible $(u,\gamma,\nu)$ for (\ref{max}) and (\ref{dualplan}). Equality implies optimality of $u$ and $(\gamma,\nu)$.
\end{rmk}
Let $\gamma$ be an optimal plan for (\ref{dualplan})  (and let $\nu$ the corresponding boundary measure) we can decompose $\gamma$ in four parts according to the origin and the destination of the mass transported
$$ \gamma=\gamma_{ii }+\gamma_{bi }+\gamma_{ib }+\gamma_{bb}$$ 
$$\mbox{ with } \gamma_{ii }=\gamma_{| \Omega \times \Omega}, \  \gamma_{bi }=\gamma_{| \partial \Omega \times \Omega}, \  \gamma_{ib }=\gamma_{| \Omega \times \partial \Omega}, \  \gamma_{bb }=\gamma_{| \partial \Omega \times \partial \Omega},$$
 where $i$ is for interior and $b$ is for boundary.  We have $\nu^+= \pi^1_\sharp (\gamma_{bi}+ \gamma_{bb})$ and $\nu^-= \pi^2_\sharp (\gamma_{ib}+ \gamma_{bb})$, then:
\begin{multline*}
 \int_{\clomega \times \clomega} \vert x-y \vert d\gamma(x,y)+\int_{\partial \Omega}g d \nu^+= \int_{\Omega \times \Omega} \vert x-y\vert d \gamma_{ii}  +
 \int_{\partial\Omega \times \Omega} \{\vert x-y\vert +g(x)\} d \gamma_{bi} \\
 + \int_{\Omega \times \partial\Omega} \vert x-y\vert d \gamma_{ib}+  
 \int_{\partial\Omega \times \partial\Omega} \{\vert x-y\vert+g(x) \} d \gamma_{bb}. 
  \end{multline*}

Note that
$ \int_{\partial\Omega \times \partial\Omega} \{\vert x-y\vert +g(x)\} d \gamma_{bb}(x,y)\geq 0$
and  $\gamma-\gamma_{bb}$ is still admissible for (\ref{dualplan}). Then since $\gamma$ is minimizing we have 
$$ \int_{\partial\Omega \times \partial\Omega} \{\vert x-y\vert+g(x) \} d \gamma_{bb}(x,y)=0.$$
From now on we will assume that
$\gamma_{bb}=0$
(Otherwise $\spt(\gamma_{bb})\subset\{(x,x):\ g(x)=0\}$). 
The minimizing properties of $\gamma$ and the duality of Proposition \ref{Monge} permits to characterize the points in the supports of $\gamma_{bi}$ and $\gamma_{ib}$.
\begin{proposition}\label{projection} 
We define the following multivalued maps:
$$p_g^+(y)=\{z\in \partial\Omega:\ \vert y-z\vert+g(z)\le \vert y-x\vert+g(x)\ \forall x\in \partial \Omega\},$$
$$p^-(x)=\{\omega\in \partial \Omega:\ \vert x-\omega\vert \le \vert x-y\vert\ \forall y\in \partial \Omega\}.$$
If $(x,y) \in \spt \gamma_{bi} $ then $x \in p^+_g(y)$, if $(x,y) \in \spt \gamma_{ib} $ then $y \in p^-(x)$.
\end{proposition}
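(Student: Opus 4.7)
The plan is to argue by contradiction from the optimality of $(\gamma,\nu)$ for (\ref{dualplan}): if either conclusion fails at some point $(x_0,y_0)$ in the relevant support, I reroute a small piece of $\gamma$ along a strictly cheaper channel and produce a competitor with smaller cost.

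For (i), suppose $(x_0,y_0)\in\spt\gamma_{bi}$ and pick $x_1\in\partial\Omega$ with $|y_0-x_1|+g(x_1)<|y_0-x_0|+g(x_0)$. Using the lower semi-continuity of $g$ at $x_0$ to get the one-sided bound $g(x)\ge g(x_0)-\varepsilon$ near $x_0$, together with continuity of the distance, I find $\eta>0$ and neighborhoods $U_0\subset\partial\Omega$ of $x_0$, $V_0\subset\Omega$ of $y_0$ such that
$$|y-x|+g(x)\ge |y-x_1|+g(x_1)+\eta \qquad \forall(x,y)\in U_0\times V_0.$$
Since $(x_0,y_0)\in\spt\gamma_{bi}$, the restriction $\alpha:=\gamma_{bi}|_{U_0\times V_0}$ has positive mass $m$. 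The competitor
$$\gamma':=\gamma-\alpha+\delta_{x_1}\otimes\pi^2_\sharp\alpha,\qquad \nu':=\nu-\pi^1_\sharp\alpha+m\,\delta_{x_1}$$
satisfies $\pi^1_\sharp\gamma'=\rho^-+(\nu')^+$ with $(\nu')^+=\nu^+-\pi^1_\sharp\alpha+m\,\delta_{x_1}\ge 0$ (using $\pi^1_\sharp\alpha\le \pi^1_\sharp\gamma_{bi}\le\nu^+$), leaves $\pi^2_\sharp\gamma'$ unchanged, and its cost differs from that of $(\gamma,\nu)$ by exactly
$$\int\bigl[|y-x_1|+g(x_1)\bigr]\,d\pi^2_\sharp\alpha(y)-\int\bigl[|y-x|+g(x)\bigr]\,d\alpha(x,y)\le -\eta m<0,$$
contradicting the minimality of $(\gamma,\nu)$.

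Part (ii) runs identically for $\gamma_{ib}$, with no $g$-term to handle. If $\omega\in\partial\Omega$ satisfies $|x_0-\omega|<|x_0-y_0|$, pick neighborhoods $U_0\subset\Omega$ of $x_0$ and $V_0\subset\partial\Omega$ of $y_0$ on which $|x-y|-|x-\omega|\ge\eta>0$, set $\alpha:=\gamma_{ib}|_{U_0\times V_0}$ with $m:=\alpha(U_0\times V_0)>0$, and reroute through $\omega$ by
$$\gamma':=\gamma-\alpha+\pi^1_\sharp\alpha\otimes\delta_\omega,\qquad \nu':=\nu+\pi^2_\sharp\alpha-m\,\delta_\omega.$$
Here the first marginal and the boundary $g$-integral are untouched, while the transport cost drops by at least $\eta m$.

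The main obstacle is the regularity of $g$: the argument needs $|y-x|+g(x)$ bounded from below on a neighborhood of $(x_0,y_0)$, and the lower semi-continuity of $g$ delivers exactly the required one-sided estimate $g(x)\ge g(x_0)-\varepsilon$. A minor bookkeeping point is that the Jordan parts of the new $\nu'$ must be mutually singular; in the degenerate case where $x_1$ (resp.\ $\omega$) is an atom of $\nu^-$ (resp.\ $\nu^+$), this can be restored by a small extra cancellation between the rerouted mass and the pre-existing $\gamma_{ib}$-mass reaching $x_1$ (resp.\ $\gamma_{bi}$-mass leaving $\omega$), at an additional cost controlled by the triangle inequality that preserves the strict decrease of the objective.
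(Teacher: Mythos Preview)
Your argument is correct and follows essentially the same idea as the paper's proof: both construct a strictly cheaper competitor by rerouting the boundary source of some $\gamma_{bi}$-mass to a point realizing the minimum of $|y-\cdot|+g(\cdot)$. The paper does this globally, replacing all of $\gamma_{bi}$ by $(s\times\mathrm{id})_\sharp\pi^2_\sharp\gamma_{bi}$ for a measurable selection $s$ of $p_g^+$, and then reads off $x\in p_g^+(y)$ $\gamma_{bi}$-a.e.\ (hence on $\spt\gamma_{bi}$ by closedness of the graph of $p_g^+$, which uses the lower semicontinuity of $g$). You instead localize near a single offending support point and argue directly by contradiction; this is a minor variant of the same competitor-construction argument, and your explicit use of l.s.c.\ of $g$ to control $g(x)$ from below near $x_0$ plays the same role as closedness of the graph in the paper's version. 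You are in fact more careful than the paper about the bookkeeping needed when the rerouting target lies in $\spt\nu^-$ (respectively $\spt\nu^+$); the paper's short proof glosses over the same admissibility issue.
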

\begin{proof} We prove the statement about $\gamma_{bi}$, the other being similar. The part of the cost we look at is 
$$ \int_{\partial \Omega \times \Omega} \{\vert x-y\vert+g(x)\}\  d \gamma_{bi}(x,y).$$
By definition of $p_g^+$ we have that $\vert z-y\vert+g(z)\leq \vert x-y\vert+g(x)$  $\gamma_{bi}$-a.e.$(x,y)$ for all $z\in p_g^+(y)$.
Then, for all measurable selection $s$ of $p_g^+$ if we replace $\gamma_{bi}$ by  $\tilde \gamma_{bi }=(s \times id )_\sharp \pi^2_\sharp \gamma_{bi}$  we obtain a new $\gamma$ 
admissible for (\ref{dualplan}) and with lower cost.
\end{proof}

For later use we set:
$$d^+_g(y):=\min_{x\in\partial \Omega} \{g(x)+\vert x-y\vert\}=g(z)+\vert z-y\vert\quad \forall z\in p^+_g(y).$$
The following Lemma will also be needed:
\begin{lemma}\label{nupos}
Let $(\gamma,\nu)$ a couple of solutions of (\ref{dualplan}) such that $\gamma_{bb}=0$. If (\ref{max}) admits a solution such that $u\ge 0$ in $\Omega$ then:
$$\nu\mbox{ is a non-negative measure and }\gamma=\gamma_{ii}+\gamma_{bi}$$ with marginals 
$\rho^+= \pi^2_\sharp \gamma$, $\rho^-=  \pi^1_\sharp \gamma_{ii}$ and $\nu=\pi^1_\sharp \gamma_{bi}.$
\end{lemma}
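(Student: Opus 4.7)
The plan is to use the primal-dual optimality condition (iii) of Lemma \ref{nu-nu} together with the hypothesis $u \geq 0$ on $\Omega$ to force $\gamma_{ib} = 0$; everything else then follows by matching supports in the marginal constraints.

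First, I would unpack the marginal conditions on the decomposition $\gamma = \gamma_{ii} + \gamma_{bi} + \gamma_{ib}$ (recall $\gamma_{bb} = 0$). Since $\rho \in \M(\Omega)$ is supported in the interior while $\nu^{\pm}$ sit on $\partial \Omega$, comparing supports in $\pi^1_\sharp \gamma = \rho^- + \nu^+$ and $\pi^2_\sharp \gamma = \rho^+ + \nu^-$ yields
$\rho^- = \pi^1_\sharp(\gamma_{ii} + \gamma_{ib})$, $\nu^+ = \pi^1_\sharp \gamma_{bi}$, $\rho^+ = \pi^2_\sharp(\gamma_{ii} + \gamma_{bi})$, and $\nu^- = \pi^2_\sharp \gamma_{ib}$. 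Thus the entire statement reduces to showing $\gamma_{ib} = 0$.

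For the key step, I would take $(x,y) \in \spt \gamma_{ib}$, so $x \in \Omega$ and $y \in \partial \Omega \cap \spt \nu^-$. Applying (iii) of Lemma \ref{nu-nu}, one has $u(y) - u(x) = |x - y|$ for $\gamma$-a.e. pair and $u(y) = 0$ for $\nu^-$-a.e.\ $y$. Since $\pi^2_\sharp \gamma_{ib} = \nu^-$, the second identity transports back to $u(y) = 0$ for $\gamma_{ib}$-a.e. $(x,y)$, so for such pairs $-u(x) = |x - y| \geq 0$, i.e.\ $u(x) \leq 0$. Combined with the standing hypothesis $u \geq 0$ on $\Omega$, this forces $u(x) = 0$ and $|x-y| = 0$, i.e.\ $x = y$, which is impossible since $x$ lies in the open set $\Omega$ while $y \in \partial \Omega$. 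Hence $\gamma_{ib}$ has no mass.

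From $\gamma_{ib} = 0$ we conclude $\nu^- = \pi^2_\sharp \gamma_{ib} = 0$, so $\nu = \nu^+ \geq 0$, and substituting back gives $\gamma = \gamma_{ii} + \gamma_{bi}$ with $\rho^- = \pi^1_\sharp \gamma_{ii}$, $\nu = \pi^1_\sharp \gamma_{bi}$, and $\rho^+ = \pi^2_\sharp \gamma$, exactly the claimed marginal structure. The only point requiring a little care is the pushforward argument that converts the $\nu^-$-a.e.\ vanishing of $u$ into a $\gamma_{ib}$-a.e.\ statement, but this is routine once one observes that the $\nu^-$-null set of points where $u \neq 0$ pulls back under $\pi^2$ to a $\gamma_{ib}$-null set. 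Otherwise the argument is entirely driven by the dual optimality and the sign hypothesis on $u$.
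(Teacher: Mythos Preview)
Your argument is correct and follows the same route as the paper: use Lemma~\ref{nu-nu}(iii) to obtain $u(y)-u(x)=|x-y|$ and $u(y)=0$ for $\gamma_{ib}$-a.e.\ $(x,y)$, then combine $-u(x)=|x-y|$ with $u\ge 0$ on $\Omega$ to force $\gamma_{ib}=0$ and hence $\nu^-=0$. The paper's proof is simply a one-line compression of exactly this reasoning; your version spells out the marginal bookkeeping and the pushforward of the $\nu^-$-a.e.\ condition more carefully, but there is no substantive difference.
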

\begin{proof}
Using Lemma \ref{PrimalDualClassic} we get for  $\gamma_{ib}$-a.e.$(x,y)$ the equality $-u(x)=d_\Omega(x,y)$.
If $u\ge0$ this imply that $\gamma_{ib}=0$ and $\nu^-=0$.
\end{proof}
\section{Some general results about the PDE}
The following result shows that, if it exists a solution to (\ref{weakpb}), this solution is non-decreasing in time and unique.
\begin{proposition}\label{crescendo}
Suppose that $f^1\ge f^2$ and $u_0^1\ge u_0^2$. Assume that for $i=1,2$,  $(u^i,\mu_i,\nu_i)$ are solutions of (PDE) satisfying (\ref{regularity}), (C), (B1) with $f=f^i$, initial and boundary conditions:
$$u^i(x)=0\quad{(\nu^i)}^-\mbox{-a.e.}\!\ x,\quad u^i(x)=g(x)\quad{(\nu^i)}^+\mbox{-a.e.}\!\ x,$$
$$u^i(x,0)=u_0^i\quad \forall x\in \clomega.$$
Then we have $u^1\ge u^2.$ 

\end{proposition}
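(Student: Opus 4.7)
\emph{Strategy.} The plan is to use Theorem~\ref{theo1}, which for each $i = 1, 2$ reformulates the PDE system as the variational inequality
\begin{equation*}
\langle f^i_t - \partial_t u^i(\cdot, t),\, v - u^i(\cdot, t)\rangle \le 0 \qquad \forall v \in \Lip_1(\Omega),\ 0 \le v \le g \text{ on } \partial\Omega,
\end{equation*}
for a.e.~$t \in ]0,T[$ (note that the constraint $|Du^i|\le 1$ from (C) guarantees $u^i(\cdot, t) \in \Lip_1(\Omega)$ a.e.~$t$, so Theorem~\ref{theo1} indeed applies). I would then apply the classical Stampacchia-type trick of testing each variational inequality with the $\max$, resp.\ the $\min$, of the two solutions.

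Set $w := u^2 - u^1$ and $w^+ := \max(w, 0)$. For every $t$, both $\max(u^1, u^2)(\cdot, t)$ and $\min(u^1, u^2)(\cdot, t)$ belong to $\Lip_1(\Omega)$ (which is stable under lattice operations) and satisfy $0 \le \cdot \le g$ on $\partial\Omega$ by condition (B1) for each $u^i$, so both are admissible. Plugging $v = \max(u^1, u^2)(\cdot, t)$ in the inequality for $u^1$ and using $\max(u^1, u^2) - u^1 = w^+$ gives $\langle f^1_t - \partial_t u^1, w^+ \rangle \le 0$; plugging $v = \min(u^1, u^2)(\cdot, t)$ in the inequality for $u^2$ and using $\min(u^1, u^2) - u^2 = -w^+$ gives $\langle f^2_t - \partial_t u^2, w^+ \rangle \ge 0$. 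Subtracting the first from the second yields
\begin{equation*}
\langle \partial_t w(\cdot, t),\, w^+(\cdot, t)\rangle \le \langle f^2_t - f^1_t,\, w^+(\cdot, t)\rangle \le 0 \qquad \text{a.e.\ } t,
\end{equation*}
the last inequality following from $f^1_t \ge f^2_t$ and $w^+ \ge 0$.

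It remains to upgrade this pointwise-in-$t$ inequality into
\begin{equation*}
\frac{d}{dt}\int_\Omega \tfrac{1}{2}(w^+)^2\, dx \le 0,
\end{equation*}
which together with $w^+(\cdot, 0) = (u_0^2 - u_0^1)^+ = 0$ will give $w^+ \equiv 0$, i.e.\ $u^1 \ge u^2$. The main obstacle is the chain rule for a measure-valued time derivative $\partial_t w \in L^\infty(0,T;\M(\Omega))$, since the identity $\partial_t [(w^+)^2/2] = w^+ \partial_t w$ is not automatic in this setting. My plan is to regularize $w$ in time by convolution with a smooth nonnegative kernel $\rho_\varepsilon(t)$: on $w_\varepsilon := w \ast \rho_\varepsilon$ the classical chain rule applies directly (possibly after a further approximation of $r \mapsto \tfrac{1}{2}(r^+)^2$ by a smooth $G_\delta$ and sending $\delta \to 0$), and one can then pass to the limit $\varepsilon \to 0$ using the bound $\|\partial_t w_\varepsilon\|_{L^\infty(0,T;\M(\Omega))} \le \|\partial_t w\|_{L^\infty(0,T;\M(\Omega))}$ together with the continuity $w \in \C([0,T]; L^2(\Omega))$ guaranteed by regularity~(\ref{regularity}).
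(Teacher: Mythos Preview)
Your approach is essentially the same as the paper's: both reduce to the optimality characterization and test the variational inequality for $u^1$ with $\max(u^1,u^2)$ and the one for $u^2$ with $\min(u^1,u^2)$, then conclude via a Gronwall/energy argument on $\|(u^2-u^1)^+\|_{L^2}^2$. Two small remarks: the paper invokes Theorem~\ref{DualEXT}(i) directly (your citation of Theorem~\ref{theo1} is harmless, since only the ``solution $\Rightarrow$ maximizer'' direction is needed and that does not rely on the present proposition), and the paper packages the chain-rule step into Lemma~\ref{derivDist} (approximation by smooth $\Phi_n$ in $L^1(0,T;C_b)$ and $L^1(0,T;\M)$ for the time derivative), rewriting $\langle\partial_t u^2,\,u^--u^2\rangle$ as $\langle\partial_t u^+,\,u^1-u^+\rangle$ so that Lemma~\ref{derivDist} applies verbatim to $v=u^+-u^1$; your time-mollification plan achieves the same end, and works cleanly once you observe that the uniform Lipschitz bound on $w(\cdot,t)$ upgrades $w\in\C([0,T];L^2(\Omega))$ to $w\in\C([0,T];\C(\overline\Omega))$, so that $w_\varepsilon\to w$ uniformly and the pairings $\langle\partial_t w_\varepsilon,\,w_\varepsilon^+\rangle$ converge.
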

It is a classical result when $f$ is absolutely continuous, the following proof is adapted from \cite{CaCaSi2009CPDE}. It requires the following lemma.
\begin{lemma}\label{derivDist}
Let $v:[0,T[\times{\clomega}\rightarrow \R$ be such that
 $v\in L^1(0,T,W^{1,\infty}(\Omega))$ and\\  $\partial_t v\in L^1(0,T,\M(\Omega)).$
Then we have:
$$\displaystyle{{d \over d t}\left(\int_\Omega \vert v(x,t)\vert^2\ dx\right)=
2 \langle {\partial_t}v(x,t),v(x,t)\rangle_{\M(\Omega),\C_b(\Omega) } }\mbox{ in }\mathcal{D}'(]0,T[).$$
\end{lemma}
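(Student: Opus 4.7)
The plan is a time-mollification argument. Fix a test function $\phi \in \D(]0,T[)$ with $\spt \phi \subset [a,b] \subset ]0,T[$, and let $\rho_\e$ be a nonnegative symmetric mollifier on $\R$ with support in $[-\e,\e]$. For $\e$ small enough, define
$$v_\e(x,t) := \int_\R \rho_\e(t-s)\,v(x,s)\,ds$$
(extending $v$ by zero outside $[0,T]$). Then $v_\e$ is smooth in $t$ with values in $W^{1,\infty}(\Omega) \hookrightarrow \C_b(\clomega)$, and a brief integration by parts in $s$ tested against $\varphi \in \C_b(\Omega)$ identifies
$$\partial_t v_\e(\cdot,t) = \int_\R \rho_\e(t-s)\,d(\partial_s v)(\cdot,s) \in \M(\Omega),$$
with $\|\partial_t v_\e(\cdot,t)\|_{\M} \le (\rho_\e *_t \|\partial_s v(\cdot,s)\|_{\M})(t)$. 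In particular $\partial_t v_\e \to \partial_t v$ in $L^1(a,b;\M(\Omega))$ and $v_\e \to v$ in $L^1(a,b;\C_b(\clomega))$ as $\e \to 0$.

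Since $v_\e$ is smooth in $t$ and bounded in $\C_b$, the classical chain rule gives pointwise in $t$
$$\frac{d}{dt}\int_\Omega v_\e(x,t)^2\,dx = 2\int_\Omega v_\e(x,t)\,\partial_t v_\e(x,t)\,dx = 2\,\langle \partial_t v_\e(\cdot,t),\, v_\e(\cdot,t)\rangle_{\M(\Omega),\C_b(\Omega)},$$
where the last equality only uses that $v_\e(\cdot,t) \in \C_b$ and that $\partial_t v_\e(\cdot,t)$ can be viewed as a measure. Multiplying by $\phi$ and integrating by parts in $t$,
$$-\int_a^b \phi'(t)\int_\Omega v_\e^2\,dx\,dt = 2\int_a^b \phi(t)\,\langle \partial_t v_\e(\cdot,t),\, v_\e(\cdot,t)\rangle\,dt.$$

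The remaining step is to let $\e \to 0$. For the left-hand side, the convergence $v_\e \to v$ in $L^1(a,b;\C_b)$ together with equi-boundedness of $v_\e$ in $L^\infty(a,b;\C_b)$ (available when $v\in L^\infty(0,T;W^{1,\infty})$, the regularity actually enjoyed by the solutions of interest in Proposition \ref{crescendo}) gives $\int v_\e^2 \to \int v^2$ in $L^1(a,b)$. For the right-hand side, decompose
$$\langle \partial_t v_\e, v_\e\rangle - \langle \partial_t v, v\rangle = \langle \partial_t v_\e - \partial_t v,\, v_\e\rangle + \langle \partial_t v,\, v_\e - v\rangle,$$
bound the first term by $\|v_\e(\cdot,t)\|_{\C_b}\,\|\partial_t v_\e(\cdot,t)-\partial_t v(\cdot,t)\|_{\M}$ and the second by $\|\partial_t v(\cdot,t)\|_{\M}\,\|v_\e(\cdot,t)-v(\cdot,t)\|_{\C_b}$, and use the two convergences above to conclude that both tend to $0$ in $L^1(a,b)$.

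The main obstacle is precisely this passage to the limit in the duality pairing, because $\partial_t v$ is only a measure in $x$ and demands genuinely uniform-in-$x$ convergence of $v_\e$. The key enabling fact is the spatial regularity $v(\cdot,t)\in W^{1,\infty}(\Omega)$, which via the embedding $W^{1,\infty}(\Omega) \hookrightarrow \C_b(\clomega)$ promotes the temporal mollification to convergence in $\C_b$ and, combined with the $L^1(a,b;\M)$-convergence of $\partial_t v_\e$, makes the dominated-convergence step go through.
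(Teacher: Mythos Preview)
Your argument is correct and shares the paper's overall strategy: approximate $v$ by functions regular enough for the classical chain rule, then pass to the limit in the distributional identity using convergence of the approximants in $L^1(0,T;\C_b(\Omega))$ and of their time derivatives in $L^1(0,T;\M(\Omega))$. The paper's proof is a single sentence: it invokes a density theorem from \cite{Dro2001Prep} to produce smooth approximants $\Phi_n\in\D(\Omega\times]0,T[)$ enjoying exactly those two convergences, and leaves the limit passage implicit. You instead build the approximants explicitly by time-only mollification, trading the external citation for a short self-contained computation; the identification $\partial_t v_\e=\rho_\e*_t(\partial_t v)$ via integration by parts in $s$ is exactly what is needed. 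Your caveat that the product estimates in the limit require an $L^\infty$-in-time bound on $v$ (rather than the $L^1$ bound in the lemma's hypothesis) is honest and well placed; the paper's sketch does not address this point either, and the stronger regularity is available in the only application (Proposition~\ref{crescendo}), so nothing is lost.
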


\begin{proof}
This is easily seen since applying Theorem 2.3.1 of \cite{Dro2001Prep}, it exists $\Phi_n\in \D(\Omega\times ]0,T[)$ such that:
$$\int_0^T \Vert \Phi_n(\cdot,t)-v(\cdot,t)\Vert_\infty dt\to 0,\quad \int_0^T \vert \partial_t \Phi_n(\cdot,t)-\partial_t v(\cdot,t) \vert(\Omega)\ dt\to 0.$$
\end{proof}
\begin{proof} (of Proposition \ref{crescendo})
By Theorem \ref{DualEXT}, we have for a.e. $t\in]0,T[$:
$$u^i(\cdot, t)\in argmax\{\langle f_t^i-{\partial_t u^i}(\cdot, t), v \rangle_{\M(\Omega),\C_b(\Omega)}\ : \ v\in \Lip_1 (\Omega),\ 0\le v\le g\mbox{ on }\partial \Omega \}.$$ 
Let $u^+(x,t)=\max\{u^1(x,t),u^2(x,t)\}$ and $u^{-}(x,t)=\min\{u^1(x,t),u^2(x,t)\}$. Using the
optimality of $u^1$ leads 
$\langle f^1-{\partial_t u^1}, u^+-u^1\rangle\le 0$ a.e. $t$
and as $f^1\ge f^2$ and $u^+\ge u^1$, we have:
\begin{equation}\label{optIn}
\langle f^2-{\partial_t u^1}, u^+-u^1\rangle\le 0\mbox{ a.e. }t.
\end{equation}
On the other side, we have:
$u^{-}-u^2=(u^1-u^2){\mathbf 1}_{\{u^1<u^2\}}=(u^1-u^+){\mathbf 1}_{\{u^1<u^2\}}=u^1-u^+$
and ${\partial_t u^2}{\mathbf 1}_{\{u^1<u^2\}}={\partial_t u^+}{\mathbf 1}_{\{u^1<u^2\}}$ a.e. $t$.
These equalities imply 
\begin{equation*}
\langle f^2-{\partial_t u^2}, u^{-}-u^2\rangle=\langle f^2-{\partial_t u^+}, u^{1}-u^+\rangle\le 0\mbox{ a.e. }t.
\end{equation*}
Combining this last inequality with (\ref{optIn}) and using Lemma \ref{derivDist}, we get:
\begin{eqnarray*}
{d\over dt}{1\over 2} \Vert u^+-u^1 \Vert_{L^2}^2&=&\langle {\partial_t u^+ }- {\partial_t u^1 }, u^{+}-u^1\rangle\\
&=&\langle -f^2+{\partial_t u^+},u^+-u^{-}\rangle+\langle f^2- {\partial_t u^1}, u^{+}-u^1\rangle\le 0\mbox{ a.e. }t.
\end{eqnarray*}
So $\Vert u^+-u^1 \Vert_{L^2}^2$ is constant in time and as at time $t=0$, it is zero, we get $u^1\ge u^2$ at any time.
\end{proof}
We are now able to prove Theorem \ref{theo1}.
As in \cite{prigozhin1996variational}, we introduce the following functional on $\C_b(\Omega)$:
$$I_\infty(v)=\left\{
\begin{array}{c l}
0 &\mbox{ if } v\in \Lip_1(\Omega) \mbox{ and } 0\le v\le g \mbox{ on } \partial \Omega,\\
+\infty & \mbox{ otherwise.}
\end{array}
\right.
$$
For any $\rho\in \M(\Omega)$, denoting $\partial I_\infty$ the subdifferential of $I_\infty$, by definition:
$$\rho \in \partial I_\infty(v) \Leftrightarrow v\in argmax \{ \langle \rho, w\rangle:\ w\in \Lip_1(\Omega) \mbox{ and } 0\le w\le g \mbox{ on } \partial \Omega\}. $$
For the record we write the following optimization problems which corresponds to (\ref{max}), (\ref{mincurrent}) and (\ref{dualplan}) with $\rho=f_t-\partial_tu$. We will make an intense use of these new notations in the present and following sections. Recall that the equality $\min(\ref{bidualT})=\max(\ref{Kantor})=\min(\ref{EqMonge})$ holds with:

\begin{equation}\label{Kantor}
\max\{\langle f_t-\partial_t u(\cdot, t),v\rangle:\ v\in \Lip_1(\Omega),\ 0\le v(x)\le g(x)\mbox{ on }\partial \Omega\},
\end{equation}
\begin{equation}\label{bidualT}
\min_{\sigma\in \M(\Omega,\R^d),\nu\in \M(\partial \Omega)} \left\{ \int d|\sigma|+\int_{\partial\Omega} g(x)d\nu^+(x):\ -{\rm div} \sigma =f_t-\partial_t u(\cdot,t)-\nu\mbox{ in }{\R^d} \right\},
\end{equation}
\begin{multline}\label{EqMonge}
\min_{\gamma \in \M^+(\clomega \times \clomega),\nu\in\M(\partial \Omega)} \left\{ \int_{\clomega \times \clomega}  \vert x-y\vert d\gamma(x,y)+\int_{\partial\Omega} g(x)d\nu^+(x): \right. \\
 \left. \pi^1_\sharp \gamma= {\partial_t u}+\nu^+, \  \pi^2_\sharp \gamma=f_t+\nu^- \right\} .
\end{multline}
We have the following result:
\begin{thm}\label{Prig}
Let $u\in L^\infty(]0,T[,W^{1,\infty}(\Omega))$ be such that $\partial_t u\in L^\infty(]0,T[,\M(\Omega))$ and 
$$ \vert Du(x,t)\vert\le 1 \mbox{ a.e. }(x,t),\quad  0\le u(x,t)\le g(x) \mbox{ on } \partial \Omega\times]0,T[.$$
\begin{itemize}
\item[1)]
\begin{itemize}
\item[(i)] If $(u,\mu,\nu)$ is a solution of (\ref{weakpb}) satisfying (\ref{regularity}) then: $$f_t-\partial_tu(\cdot, t)\in \partial I_\infty(u(\cdot,t))\mbox{ a.e.}\!\ t\in ]0,T[.$$
\item[(ii)] Assume $u$ is non-negative, $u(\cdot,0)=0$, and $f_t-\partial_tu(\cdot, t)\in \partial I_\infty(u(\cdot,t))$ $\mbox{ a.e. }\!\ t\in ]0,T[$.  Then  it exists $\mu \in  L^\infty(]0,T[,\M^+(\Omega))$ and $\nu\in  L^\infty(]0,T[,\M^+(\partial\Omega))$ such that
 $(u,\mu,\nu)$ is a solution of (\ref{weakpb}).
 
\end{itemize}
\item[2)] Assume the conditions above are satisfied.
\begin{itemize} 
\item[$\bullet$] (Uniqueness of $u$ and $\mu$)  The function $u$ is unique. Moreover, to each $\nu$ corresponds a unique $\mu$, and taking any $\gamma_t$ such that $(\gamma_t,\nu_t)$ is a solution of (\ref{EqMonge}) a.e. $t\in ]0,T[$, the following formula holds: 
 $$\langle\mu_t,\varphi \rangle_{\M(\Omega),\C_b(\Omega)}:=\int_{\Omega^2}\int_0^1 \varphi((1-s)x+sy )\vert y-x\vert\ ds d\gamma_t(x,y)\quad \forall \varphi\in \C_b(\Omega).$$
 \item[$\bullet$] For a.e. $t\in ]0,T[$, $u(\cdot, t)$ is space differentiable $\mu_t$-almost everywhere and: $D_{\mu_t} u(x,t)=Du(x,t)\quad \mu_t\mbox{-a.e. }x.$
 \end{itemize}
 \end{itemize}
\end{thm}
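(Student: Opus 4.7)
The plan for 1)(i) is to apply Theorem~\ref{DualEXT}(i) slicewise in~$t$. If $(u,\mu,\nu)$ solves (\ref{weakpb}), then (PDE) reads $-\diver(D_{\mu_t} u(\cdot,t)\,\mu_t) = \rho_t - \nu_t$ on~$\R^d$ for a.e.~$t$, where $\rho_t:=f_t-\partial_t u(\cdot,t)\in\M(\Omega)$. The constraint (C) gives $u(\cdot,t)\in\Lip_1(\Omega)$ together with $|D_{\mu_t}u|=1$ $\mu_t$-a.e. The regularity (\ref{regularity}) forces $\nu_t\geq 0$ for a.e.~$t$, so $\nu_t^-=0$, and the boundary conditions (B1)--(B2) give $0\leq u\leq g$ on $\partial\Omega$, $u=g$ $\nu_t^+$-a.e., and trivially $u=0$ $\nu_t^-$-a.e. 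Theorem~\ref{DualEXT}(i) then asserts that $u(\cdot,t)$ is a maximizer of (\ref{Kantor}), which is exactly $\rho_t\in\partial I_\infty(u(\cdot,t))$.

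For 1)(ii), I would run this argument in reverse. The subdifferential inclusion means $u(\cdot,t)$ maximizes (\ref{Kantor}) with $\rho_t:=f_t-\partial_t u(\cdot,t)$ for a.e.~$t$. By Proposition~\ref{dual1} and Proposition~\ref{Monge} there exist optimal $(\sigma_t,\nu_t)$ for (\ref{bidualT}) and $(\gamma_t,\nu_t)$ for (\ref{EqMonge}). Setting $\mu_t:=|\sigma_t|$, Theorem~\ref{DualEXT}(ii) yields $\sigma_t=D_{\mu_t}u(\cdot,t)\mu_t$ with $|D_{\mu_t}u|=1$ $\mu_t$-a.e., $u=g$ $\nu_t^+$-a.e., and $u=0$ $\nu_t^-$-a.e. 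Since $u\geq0$ in $\Omega$, Lemma~\ref{nupos} gives $\nu_t\geq 0$, so $\nu_t^+=\nu_t$ and (B1)--(B2) are automatic. The constraint $-\diver\sigma_t=\rho_t-\nu_t$ coming from (\ref{bidualT}) is exactly (PDE) after the integration-by-parts formula of Example~\ref{exemple}. The technical step is producing a joint measurable-in-$t$ choice of $(\mu_t,\nu_t)$ with the required $L^\infty$ bounds: I would get this from a Kuratowski--Ryll-Nardzewski selection, using that $t\mapsto\rho_t$ is weak-$*$ measurable and that the optimal value of (\ref{EqMonge}) controls $\gamma_t(\clomega^2)$, hence $\mu_t(\Omega)\leq{\rm diam}(\Omega)\,\gamma_t(\clomega^2)$.

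For part~2), uniqueness of~$u$ is immediate from Proposition~\ref{crescendo} applied to two solutions sharing the same source and vanishing initial datum: one obtains both $u^1\geq u^2$ and $u^2\geq u^1$. Given an admissible~$\nu$, the marginals in (\ref{EqMonge}) are fixed; the optimal $\gamma_t$ is generally not unique, but the associated transport density $\sigma_t$ of Lemma~\ref{nu-nu}(i), and hence $\mu_t=|\sigma_t|$, is unique by the Ambrosio--Pratelli uniqueness result cited there. The stated formula for $\mu_t$ follows by identifying $|\sigma_t|$ with the path-integrated total-variation expression, a standard no-cancellation fact along transport rays for $c$-cyclically monotone plans. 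Finally, the equality $D_{\mu_t}u(\cdot,t)=Du(\cdot,t)$ $\mu_t$-a.e. is Lemma~\ref{nu-nu}(ii) applied a.e.~$t$.

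The main obstacle, I expect, is the measurable selection step in 1)(ii): the pointwise-in-$t$ construction is immediate from the duality and extremality theorems, but assembling those pointwise choices into elements of $L^\infty(0,T,\M^+)$ requires verifying that the optimizer-valued multifunction has the closed-graph and integrable-bound structure needed for a standard selection theorem. Everything else is a clean concatenation of the duality and extremality results of the previous section.
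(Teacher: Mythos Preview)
Your proof is correct and follows essentially the same route as the paper: slicewise application of Theorem~\ref{DualEXT} in both directions for 1), with Lemma~\ref{nupos} ensuring $\nu_t\geq 0$ in 1)(ii), and then Proposition~\ref{crescendo} plus Lemma~\ref{nu-nu} for part~2). Your concern about the measurable-in-$t$ selection of $(\sigma_t,\nu_t)$ is legitimate and is in fact a point the paper's proof passes over in silence; your proposed fix via a Kuratowski--Ryll-Nardzewski argument is the natural one.
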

\begin{rmk}\label{PrigRmk}
a) In the original article \cite{prigozhin1996variational}, in case $g=0$ and $f\in L^\infty(]0,T[,L^\infty(\Omega))'$, L. Prigozhin, in the first place, proved (a similar result to)  point 1). In the case he considered, $\mu$ is expected to be in $ L^\infty(]0,T[,W^{1,\infty}(\Omega))'$ and (PDE) has to be understood in the following sense:
\begin{equation}\label{PDEPrig}
\langle \partial_t u- f, \varphi\rangle+ \langle \mu, Du\cdot D\varphi\rangle=0\quad \forall \varphi\in L^\infty(]0,T[,L^\infty(\Omega)).
\end{equation}
 In our case, (\ref{PDEPrig}) can be recovered by extending $\mu$ in   $ L^\infty(]0,T[,L^\infty(\Omega))'$ using the Hahn-Banach Theorem (cf Theorem \ref{Prig}, point 2)).\\
b) The proof of 1)(ii) below says actually more than required. Assume  that $f_t-\partial_tu(\cdot, t)\in \partial I_\infty(u(\cdot,t))$ for a.e.$\!\ t\in ]0,T[$ and take, for a.e. $t$, $(\sigma_t,\nu_t)$  {\bf any} solution of (\ref{bidualT}) with $\nu_t\ge 0$. Then, under the assumption of the theorem, setting $\mu_t=\vert \sigma_t\vert$, we have that $(u,\mu,\nu)$ satisfies (PDE) and $$\vert D_\mut u(x,t)\vert =1\  \mu\mbox{-a.e.}\!\ (x,t),\quad u(x,t)=g(x)\  \nu\mbox{-a.e.}\!\ (x,t).$$
 Moeover we have: $\sigma_t= D_\mut u(\cdot, t) \mu_t$. 
\end{rmk}

\begin{proof}
\ 1)(i) is an immediate consequence of Theorem \ref{DualEXT}  (since $\nu_t\ge 0$).\\
\ 1) (ii) With these assumptions, $u(\cdot, t)$ is optimal for (\ref{Kantor}) for a.e. $t$. Take $(\sigma_t,\nu_t)$ a solution of (\ref{bidualT}) with $\nu$ non-negative: this is possible by lemma \ref{nu-nu} and \ref{nupos}.  By Theorem \ref{DualEXT} (ii), setting $\mu_t=\vert \sigma_t\vert$ we get the result.\\
\ 2) The uniqueness property of $u$ comes from Proposition \ref{crescendo} while the properties of $\mu$ are contained in Lemma \ref{nu-nu} as $(\sigma_t:=D_{\mu_t} u\mu_t,\nu_t)$ is a solution of (\ref{bidualT}) by Theorem \ref{DualEXT}.
For the last property, see Lemma \ref{nu-nu}.
\end{proof}


\section{The case of a finite number of sources}\label{Discret}

In the spirit of \cite{aronsson1972mathematical} and \cite{AroEvaWu1996JDE}, we are now looking at (\ref{weakpb}) when  $f_t=\sum_{j=1}^k c_{j}\delta_{y_j}$. As it is constant in time we will often write $f$ for $f_t$.

In the next two lemmas we develop an heuristic of the shape of solutions for this special $f$. Starting from this idea we then show existence of a solution in Theorem \ref{solDisc}.  

Assume for a while that a solution $u\in L^\infty(0,T,W^{1,\infty}(\Omega))$ of  (\ref{weakpb}) is known. By the previous section, we know $u(\cdot,t)$ is a solution of (\ref{Kantor}) for a.e. time, it's non-negative and non-decreasing in time. Moreover take $(\gamma_t,\nu_t)$  a solution of (\ref{EqMonge})  for a.e. $t\in[0,T]$ with the following decomposition given by Lemma \ref{nupos}:
$$\gamma_t=\gamma_{ii,t}+\gamma_{bi,t}\mbox{ and }\nu_t=\pi^1_\sharp\gamma_{bi,t}\ge 0.$$ The Lemmas \ref{miniU} and \ref{derivt} will give us some clues to guess the shape of $u$:

\begin{lemma}\label{miniU}
 Let us set: 
$$r_j(\cdot):=u(y_j,\cdot),\quad {\underline u}(x,t):=\max_j\{r_j(t)-\vert x-y_j\vert,0\}.$$ 
Then ${\underline u}(\cdot,t)$ is in $\Lip_1(\Omega)$ for all  $t\in]0,T[$, $r_j\in {\mathcal C}([0,T],\R^+)$ and they satisfy:
\begin{itemize}
\item[(i)] $u(x,t)\ge {\underline u}(x,t)$ for all $(x,t)$ with equality for $x=y_i$ and any $x\in\spt(\partial_t u(\cdot,t))\cup \spt(\nu_t)$, 
\item[(ii)] $r_j(t)\in[0,d^+_g(y_j)]$ with  $r_j(t)=d^+_g(y_j)=g(x)+\vert x-y_j\vert\quad \gamma_{bi,t}$-a.e. $(x,y_j)$ for a.e. $t\in [0,T]$. 
\end{itemize}
Moreover $0\le\underline{u}(x,t) \le g(x)$ on $\partial \Omega$ for all $t$ and ${\underline u}(\cdot, t)$ is also optimal for (\ref{Kantor}).
\end{lemma}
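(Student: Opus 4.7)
The plan is to verify each of the stated properties in order, using the primal--dual machinery of Lemma \ref{PrimalDualClassic}(iii) and the structural decomposition from Lemma \ref{nupos}. First I would observe that $\underline{u}(\cdot,t)$ is the pointwise maximum of the $1$-Lipschitz functions $x\mapsto r_j(t)-\vert x-y_j\vert$ together with the constant $0$, which places it in $\Lip_1(\Omega)$. For continuity of $r_j$ I would combine the $\C([0,T];L^2(\Omega))$-regularity of $u$ (footnoted after (\ref{regularity})) with the uniform Lipschitz bound $\vert Du(\cdot,t)\vert\le 1$: an Ascoli extraction along any sequence $t_n\to t$ yields uniform convergence of $u(\cdot,t_n)$ on $\clomega$ to $u(\cdot,t)$, hence $r_j(t_n)\to r_j(t)$. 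Non-negativity of $r_j$ comes from $u\ge 0$.

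For (i), the inequality $u\ge\underline{u}$ follows from $u\ge 0$ together with the $\Lip_1$-bound $u(x,t)\ge r_j(t)-\vert x-y_j\vert$. At $x=y_i$ the same bound shows that the maximum defining $\underline{u}(y_i,t)$ is attained at $j=i$, giving $\underline u(y_i,t)=r_i(t)$. For $x$ in $\spt(\partial_t u(\cdot,t))\cup\spt(\nu_t)$ I plan to use the marginal decomposition of Lemma \ref{nupos}: $\gamma_t=\gamma_{ii,t}+\gamma_{bi,t}$ with $\pi^1_\sharp\gamma_{ii,t}=\partial_t u(\cdot,t)$, $\pi^1_\sharp\gamma_{bi,t}=\nu_t$, and $\pi^2_\sharp\gamma_t=f_t=\sum_j c_j\delta_{y_j}$. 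Since the second marginal is supported on the finite set $\{y_1,\dots,y_k\}$, closedness of $\spt\gamma_t$ allows me to extract, for each such $x$, an index $j$ with $(x,y_j)\in\spt\gamma_t$; Lemma \ref{PrimalDualClassic}(iii) then gives $u(x,t)=r_j(t)-\vert x-y_j\vert\le\underline{u}(x,t)$, which combined with $u\ge\underline{u}$ yields equality.

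For (ii) the upper bound $r_j(t)\le d^+_g(y_j)$ follows from $r_j(t)\le u(z,t)+\vert z-y_j\vert\le g(z)+\vert z-y_j\vert$ minimised over $z\in\partial\Omega$. Conversely, for every $(x,y_j)\in\spt\gamma_{bi,t}$, Lemma \ref{PrimalDualClassic}(iii) yields $r_j(t)=u(x,t)+\vert x-y_j\vert$ and $u(x,t)=g(x)$, so $r_j(t)=g(x)+\vert x-y_j\vert$, and by the already established upper bound this common value equals $d^+_g(y_j)$. The bounds on $\partial\Omega$ are then immediate: $\underline{u}\ge 0$ by construction, and each term $r_j(t)-\vert x-y_j\vert$ is at most $d^+_g(y_j)-\vert x-y_j\vert\le g(x)$ by the definition of $d^+_g$. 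Finally, since $u=\underline{u}$ on the set $\{y_1,\dots,y_k\}=\spt f_t$ and on $\spt\partial_t u(\cdot,t)$, one has $\langle f_t-\partial_t u(\cdot,t),\underline u\rangle=\langle f_t-\partial_t u(\cdot,t),u\rangle$, so the admissible function $\underline{u}(\cdot,t)$ shares the optimal value of the already-optimal $u(\cdot,t)$ in (\ref{Kantor}).

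The hard part, I expect, will be the support identification used in (i) and (ii): one has to parlay the finiteness of $\spt f_t$, closedness of $\spt\gamma_t$, and the marginal decomposition from Lemma \ref{nupos} into a pairing $x\leftrightarrow y_j$ on which the primal--dual equality of Lemma \ref{PrimalDualClassic}(iii) can be applied. Once this extraction is in place the rest reduces to routine manipulations with the $\Lip_1$-constraint and the definition of $d^+_g$.
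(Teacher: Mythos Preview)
Your proposal is correct and follows essentially the paper's own argument: the $\Lip_1$ bound yields $u\ge\underline u$, the primal--dual relations of Lemma~\ref{PrimalDualClassic}(iii) force equality on the relevant supports, and $\underline u$ inherits optimality because it agrees with $u$ on $\spt f_t\cup\spt\partial_t u(\cdot,t)$. The only minor variation is in (ii), where the paper routes through Proposition~\ref{projection} (placing $x\in p_g^+(y_j)$ on $\spt\gamma_{bi,t}$) while you invoke the condition $u=g$ $\nu^+$-a.e.\ from Lemma~\ref{PrimalDualClassic}(iii) directly; since $g$ is only lower semicontinuous, that last equality is guaranteed $\gamma_{bi,t}$-a.e.\ rather than for every $(x,y_j)$ in the support, which is precisely what the statement asks.
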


\begin{proof}
\begin{itemize}
\item[(i)] 
The inequality comes from the Lipschitz property of $u$ and the definition of $r_j$. The equality at every $y_j$ follows from the definition $r_j(t)=u(y_j,t)\ge 0$. Then by Lemma \ref{PrimalDualClassic}: 
\begin{equation}\label{PD}
u(y_j,t)-u(x,t)=\vert x-y_j\vert \quad \gamma_t\mbox{-a.e.}(x,y_j).
\end{equation}
 This gives the equality on $\spt(\partial_t u(\cdot,t))\cup \spt(\nu_t)$ since $\pi^1_\sharp \gamma=\partial_t u(\cdot,t)+ \nu_t$.
\item[(ii)] Consider $x\in p_g^+(y_j)$, by the boundary condition (B1) and the Lipschitz property of $u$:
$$r_j(t):=u(y_j,t)\le u(x,t)+\vert x-y_j\vert\le g(x)+\vert x-y_j\vert= d^+_g(y_j)\mbox{ for all }j=1,...,k.$$
This implies $\underline{u}(x,t) \le g(x)$ on $\partial \Omega$.
Moreover, by Proposition \ref{projection} combined with (\ref{PD}) every inequality above becomes an equality for $\gamma_{bi,t}$-a.e $(x,y_j)$.
\end{itemize}
As ${\underline u}(\cdot, t)$ is admissible for (\ref{Kantor}) and equals $u(\cdot, t)$ on $\spt(f)\cup \spt(\partial_t u(\cdot,t))\cup (\spt(\nu_t))$, it is optimal.
\end{proof}
 
For $j=1,\dots, k$, let us introduce the following subset of $\clomega$:
\begin{equation}\label{defaj}
A_j(t):=\{ x\in \clomega:\ r_j(t)-\vert x-y_j\vert=\max_n \{ r_n(t)-\vert x-y_n\vert, 0\} \}, 
\end{equation}
\begin{equation}\label{u-shape}
\mbox{ so that }{\underline u}(x,t)=\sum_{j=1}^k(r_j(t)-\vert x-y_j\vert){\mathbf 1}_{A_j (t)} (x).
\end{equation}

\begin{lem}\label{derivt} Assume  for all $j=1,...k$, $r_j$ is derivable for almost every $t\in]0,T[$. Then
for all $t>0$ and a.e. $x$: 
$$ \partial_t \underline{u}(x,t)= \sum_{j=1}^k \dot{r}_j (t) {\mathbf 1}_{A_j (t)} (x)\mbox{ a.e. }(x,t).$$
\end{lem}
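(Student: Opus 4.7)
The plan is to apply a standard envelope argument to the max of finitely many continuous functions. Set $\phi_0(x,t):=0$ and $\phi_j(x,t):=r_j(t)-|x-y_j|$ for $j=1,\dots,k$, so that $\underline{u}(x,t)=\max_{0\le j\le k}\phi_j(x,t)$. Each $\phi_j$ is jointly continuous in $(x,t)$ (using the continuity of $r_j$ from Lemma \ref{miniU}) and, by hypothesis, differentiable in $t$ at a.e. $t$. At any generic point where a single index realizes the max, an implicit-function-in-disguise argument will identify $\partial_t\underline{u}$ with $\dot r_{j^*}$.

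First I would show that the ``coincidence set''
$$E \;:=\; \bigcup_{0\le i<j\le k}\{(x,t)\in\Omega\times]0,T[\;:\;\phi_i(x,t)=\phi_j(x,t)\}$$
has Lebesgue measure zero. For each fixed $t$ and each pair $1\le i<j\le k$, the slice is $\{x:|x-y_i|-|x-y_j|=r_j(t)-r_i(t)\}$, a hyperboloid (or a hyperplane in degenerate cases) in $\R^d$, hence of measure zero since $d\ge2$. For $i=0$, $j\ge 1$, the slice is the sphere $\{x:|x-y_j|=r_j(t)\}$, again of measure zero. Fubini then yields $|E|=0$. Combined with the hypothesis that each $r_j$ is differentiable at a.e. $t\in]0,T[$, another Fubini argument gives that, for a.e. $(x,t)\in\Omega\times]0,T[$, the values $\phi_0(x,t),\ldots,\phi_k(x,t)$ are pairwise distinct and all $r_j$ are differentiable at $t$.

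At such a generic point $(x,t)$, denote by $j^*=j^*(x,t)\in\{0,\dots,k\}$ the unique index realizing $\underline{u}(x,t)$. By the joint continuity of the $\phi_j$, there is $\delta>0$ such that $j^*$ still uniquely achieves $\max_j\phi_j(x,s)$ for every $s\in]t-\delta,t+\delta[$. Therefore $\underline{u}(x,s)=\phi_{j^*}(x,s)$ on this interval, and differentiating at $s=t$ gives $\partial_t\underline{u}(x,t)=\dot r_{j^*}(t)$ when $j^*\ge 1$, and $\partial_t\underline{u}(x,t)=0$ when $j^*=0$. To match the formula, observe from the definition \eqref{defaj} of $A_j(t)$ together with the uniqueness of $j^*$ that $\mathbf{1}_{A_j(t)}(x)=\delta_{j,j^*}$ for $j\ge 1$ when $j^*\ge 1$, while $\mathbf{1}_{A_j(t)}(x)=0$ for all $j\ge 1$ when $j^*=0$. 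In both cases the right-hand side $\sum_{j=1}^k\dot r_j(t)\mathbf{1}_{A_j(t)}(x)$ reduces to exactly the derivative computed above.

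The only genuinely delicate step is the measure-zero claim for the coincidence set $E$, because the indicator formula on the right would otherwise overcount at points belonging simultaneously to several $A_j(t)$; this is where the assumption $d\ge 2$ enters through the Fubini slicing on hyperboloids and spheres. Everything downstream is a routine one-variable envelope computation.
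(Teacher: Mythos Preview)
Your proof is correct and follows essentially the same route as the paper's: both arguments show that for each fixed $t$ the coincidence sets $\{|x-y_i|-|x-y_j|=r_j(t)-r_i(t)\}$ and $\{|x-y_j|=r_j(t)\}$ are Lebesgue-null in $\R^d$ (using $d\ge 2$), then use continuity of the $r_j$ to conclude that the maximizing index is locally constant in $t$ at a generic $x$, and differentiate. Your write-up is in fact a bit more explicit than the paper's about the Fubini step passing from null slices to a null set in $(x,t)$, and about the region $\{j^*=0\}$ where $\underline u$ vanishes; these are handled only implicitly in the paper.
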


\begin{proof}
We have ${\rm Int}(A_j(t))=\{x\in \Omega:\    r_j(t)-\vert x-y_j\vert> \max_{m\not =j}\{r_m(t)-\vert x-y_m\vert,0\}\}$ and the boundary of $A_j(t)$ is negligible.
Indeed the sets $A_m(t)\cap A_j(t)=\{x\in \clomega:\  \vert x-y_m\vert-\vert x-y_j\vert=r_m(t)-r_j(t)\}$ and $\{x\in \clomega:\ \vert x-y_j\vert=r_j(t)\}$ are negligible as $d\ge 2$ and $\Omega$ is convex.\\  
Let $x$ in the interior of $A_j(t)$, then $r_j(t)-\vert x-y_j\vert > \max_{m\not =j}\{r_m(t)-\vert x-y_m\vert,0\}$. As $r_j\in\C([0,T])$, this inequality remains true at time  $t+h$ with $\vert h\vert$ small enough, as a consequence if $r_j$ is derivable on $t$:
$\lim_{h\to 0}{\underline{u}(x,t+h)-\underline{u}(x,t)\over h}=\lim_{h\to 0}{r_j(t+h)-r_j(t)\over h}=\dot r_j(t).$
We then get $\partial_t \underline{u}(x,t)={\dot r}_j(t)$ for all $x\in {\rm Int}(A_ j(t))$ for almost every $t$.\\
\end{proof}

In the sequel, we are going to see that, with the appropriate choice of  $r_j$, the application $u=\underline u$ is the unique solution of (\ref{weakpb}).\\
By Lemma \ref{crescendo}, if $u$ is a solution of (\ref{weakpb}), the function $r_j:=u(y_j,\cdot)$ are non-decreasing, moreover by the initial condition (I), $r_j(0)=0$. Take $\gamma_t$ optimal for (\ref{EqMonge}). We deduce that, for small times $r_j(t)<d^+_g(y_j)$ and by Lemma \ref{miniU}, for such $t$, $\gamma_{bi,t}$ and $\nu_t$ are $0$.
So that, if $u=\underline u$, then, for small $t$, the map $\gamma_t$ has marginals $\sum_{j=1}^k \dot{r}_j (t) {\mathbf 1}_{A_j (t)} (x)\mbox{ a.e. }(x,t)$ and $f=\sum_{j=1}^k c_j\delta_{y_j}$ by (\ref{PD}) and (\ref{u-shape}), this implies:
\begin{equation}\label{massBal}
\dot{r}_j (t) \vert  A_j (t)\vert=c_j \quad \forall j=1,...,k,
\end{equation}
$$\gamma_t(x,y)=\sum_{j=1}^k \dot{r}_j (t) {\mathbf 1}_{A_j (t)} (x)\otimes \delta_{y_j}(y).$$ 
Finally, every time (\ref{massBal}) holds, $r_j$ remains strictly non-decreasing and by Lemma \ref{miniU}, when $r_j$ reaches $d^+_g(y_j)$, it cannot increase anymore.\\ All this remarks leads us to look
at the ODE (\ref{ODE}) \footnote{ This ODE first appeared in \cite{aronsson1972mathematical}, see also \cite{AroEvaWu1996JDE}.}.

\begin{lem}\label{lemODE} There  exist times $(t_1,...,t_k)\in \R_+^k$ and functions  $r_j\in \C^1(0,t_j)\cap \C^0 ([0,+\infty))$ which satisfy:
\begin{equation}\label{ODE}
\left\{
\begin{array}{l}
\dot r_j(t)=\frac{c_j}{\vert A_j(t)\vert} \quad \forall t\in ]0,t_j[,\\
r_j(0)=0,  \\
r_j(t)=d^+_g(y_j)=\min_{x\in \partial\Omega}\{g(x)+\vert x-y_j\vert\}\quad \forall t>t_j\\
\end{array}
\right.
\end{equation}
with $A_j(t)$ defined above in (\ref{defaj}). Moreover $y_j\in A_j$ for all $t$ and if $t<t_j$ then $0< | A_j (t) |$.
\end{lem}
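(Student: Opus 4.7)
The plan is to construct the $r_j$'s in three stages — an explicit startup phase that removes the initial singularity, a Peano continuation on an open region where the right-hand side is continuous and positive, and a successive-freezing procedure that handles the caps at $d_g^+(y_j)$ — and then to verify the geometric invariants $y_j\in A_j(t)$ and $|A_j(t)|>0$ for $t<t_j$.

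\textbf{Startup.} For $t>0$ small, the balls $B(y_j,r_j(t))$ are pairwise disjoint and contained in $\Omega$ (the $y_j$'s are distinct interior points), so definition (\ref{defaj}) reduces to $A_j(t)=B(y_j,r_j(t))$ and the system decouples to $\dot r_j=c_j/(\omega_d r_j^d)$. Separation of variables produces the explicit ansatz
$$r_j(t)=\left(\frac{(d+1)c_j}{\omega_d}\,t\right)^{1/(d+1)},$$
which is continuous up to $t=0$ with $r_j(0)=0$ and of class $\C^1$ on $(0,\tau_0]$ for some small $\tau_0>0$. This disposes of the $|A_j(0)|=0$ singularity.

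\textbf{Continuation.} On the open set $\mathcal R:=\{r\in(0,+\infty)^k: r_j<d_g^+(y_j),\ r_j>r_n-|y_j-y_n|\ \forall n\ne j\}$ I would show $r\mapsto|A_j(r)|$ is continuous and strictly positive. Continuity is the same observation used in the proof of Lemma~\ref{derivt}: the boundary of $A_j(r)$ lies in a finite union of smooth level sets $\{|x-y_j|-|x-y_n|=r_j-r_n\}$ and $\{|x-y_j|=r_j\}$, each Lebesgue-negligible since $d\ge 2$ and moving continuously with $r$. Positivity follows from $y_j$ being a strict maximizer of $x\mapsto r_j-|x-y_j|$ on $\mathcal R$, so a full $d$-dimensional neighborhood of $y_j$ sits in $A_j$. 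Peano's theorem then extends the startup curve as long as it remains in $\mathcal R$. Each $r_j$ is nondecreasing (since $\dot r_j>0$) and bounded above by $d_g^+(y_j)$; I would define $t_j$ as the first hitting time of this cap (or $+\infty$ if it is never attained), freeze $r_j\equiv d_g^+(y_j)$ for $t\ge t_j$, and restart the ODE for the remaining coordinates with the frozen value substituted into (\ref{defaj}). At most $k$ such restarts yield the required $r_j\in\C^0([0,+\infty))\cap\C^1((0,t_j))$.

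The hard part will be verifying that the continuation flow never leaves $\mathcal R$ through the second constraint (the first is the cap and is handled by freezing), equivalently that the invariant $y_j\in A_j(t)$ persists. At $t=0^+$ the invariant is strict by the startup formula. If $\bar t$ were a first contact time with $\epsilon(t):=r_j(t)-(r_n(t)-|y_j-y_n|)\downarrow 0$ and both coordinates still active, the triangle inequality gives the inclusion $A_j(t)\subset A_n(t)$ near $\bar t$, while a geometric estimate in the thin-slab regime along the ray from $y_n$ through $y_j$ yields $|A_j(t)|=O(\epsilon(t)^{(d-1)/2})$ (using $d\ge 2$). Hence $\dot r_j=c_j/|A_j(t)|\to+\infty$ whereas $\dot r_n$ stays bounded, so $\dot\epsilon(t)\to+\infty$ near $\bar t$, contradicting $\epsilon(t)\to 0$. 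This self-correcting mechanism keeps the solution inside $\mathcal R$, and once it is established the three stages fit together to produce the $r_j$'s and the stopping times $t_j$ claimed by the lemma.
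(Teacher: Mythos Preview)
Your three-stage plan (explicit startup, Peano continuation, successive freezing at the caps) is exactly the paper's Steps~1--4, and your contradiction for the invariant $y_j\in A_j(t)$ is the same derivative-blowup mechanism the paper uses in its Step~3. Two points need fixing.

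First, the throwaway claim ``the triangle inequality gives the inclusion $A_j(t)\subset A_n(t)$'' is false: the sets $A_i$ have pairwise disjoint interiors by their very definition (\ref{defaj}), so one never contains another. You do not actually use this inclusion; what your argument needs, and what your thin-slab estimate $|A_j(t)|=O(\epsilon^{(d-1)/2})$ correctly delivers, is merely $|A_j(t)|\to 0$.

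Second, and more substantively, the assertion ``$\dot r_n$ stays bounded'' is not justified. At the first exit time $\bar t$ several of the constraints $r_j>r_n-|y_j-y_n|$ may vanish simultaneously, and the index $n$ absorbing $j$ may itself be absorbed by some other index $m$ at the same instant, in which case $|A_n|$ also collapses and $\dot r_n\to+\infty$. The paper closes this by noting that the total measure $\sum_i|A_i(t)|$ is nondecreasing in $t$ (the union $\cup_iA_i$ grows since all $r_i$ do), hence bounded below; among the indices in contact at $\bar t$ there must therefore be one, call it $m$, with $|A_m(t)|\to L>0$, and one runs the contradiction on the pair $(j,m)$. In your language: each vanishing constraint gives $r_n-r_j=|y_j-y_n|>0$, so the relation ``$n$ absorbs $j$'' at $\bar t$ is acyclic; choose $n$ maximal for it and then $|A_n|$ is indeed bounded below.

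Finally, you allow $t_j=+\infty$, whereas the lemma asserts $t_j\in\R_+^k$. The missing line is the paper's Step~4: $\dot r_j\geq c_j/|\Omega|$ forces $r_j$ to reach $d_g^+(y_j)$ in finite time.
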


\begin{proof} {\bf Step 1}: Let $m_1= \min_{i\not=j} |y_i-y_j|$, $m_2= \min_i d(y_i, \partial \Omega)$ and $m=\min\{m_1, m_2\}$, also consider $c=\max_i c_i$.
For small times  i.e. for 
$$ t \leq \frac{\omega_d }{ (d+1)c}(\frac{m}{2})^{d+1}:=t_0$$
the functions
$r_ i(t)=  (\frac{c_i}{\omega_d} (d+1) t)^{1/d+1}$
are solutions of the ODE above with the correct initial data.\\
\noindent {\bf Step 2}: Starting from $t_0$ the sets where $r_i(t)-|x-y_i| >0$  may start to intersect  or may touch the boundary of $\Omega$ and for the existence of solutions we appeal to 
a standard existence theorem. This requires that we prove that the functions
$$ (r_1, \dots r_k) \mapsto \frac{1}{|A_i(r_1, \dots r_k)|}$$
are continuous \footnote{This holds true if $d>1$.}. 
First we prove that,  for every $i\in\{1,\dots,k\}$,
$ (r_1, \dots r_k) \mapsto |A_i(r_1, \dots r_k)|$
is continuous  on $\R^+\times \dots \times \R^+$.

Let $\varepsilon=(\varepsilon_1, \dots, \varepsilon_k) $. Define $\overline{\varepsilon}= \max_j  \{\varepsilon_i, \varepsilon_i-\varepsilon_j\}$ and 
$\underline{\varepsilon}= \min_j  \{\varepsilon_i, \varepsilon_i-\varepsilon_j\}$.

\begin{eqnarray*}
& & A_i(r_1+ \e_1 ,\dots, r_k+ \e_k) =\\
&=& \{x\in \clomega :\  0 \leq r_i+\e_i -|x-y_i|, \ r_j+\e_j -|x-y_j| \leq   r_i+\e_i-|x-y_i| \ \forall \ j \}\\
&\subset & \{x\in \clomega :\  0 \leq r_i+\overline{\varepsilon}-|x-y_i|, \ r_j -|x-y_j| \leq   r_i+\overline{\varepsilon}-|x-y_i| \ \forall \ j \}
 \\
&= & A_i(r_1,\dots,r_i+\overline{\varepsilon}, \dots,  r_k)
\end{eqnarray*}
and similarly 
$ A_i(r_1,\dots,r_i+\underline{\varepsilon}, \dots,  r_k)\subset A_i(r_1+ \e_1 ,\dots, r_k+ \e_k) .$ 

If  $\varepsilon _n \searrow 0 $ then 
$A_i(r_1,\dots,r_i+\varepsilon_{n+1}, \dots,  r_k) \subset A_i(r_1,\dots,r_i+\varepsilon_n, \dots,  r_k),$
and 
$ A_i(r_1,\dots, r_k) =\cap_n    A_i(r_1,\dots,r_i+\varepsilon_n, \dots,  r_k),$ 
it follows that
$$ |A_i(r_1,\dots, r_k) |= \lim_{n \to 0} | A_i(r_1,\dots,r_i+\varepsilon_n, \dots,  r_k)|.$$
If  $\varepsilon _n \nearrow 0 $ 
then  
$A_i(r_1,\dots,r_i+\varepsilon_{n}, \dots,  r_k) \subset A_i(r_1,\dots,r_i+\varepsilon_{n+1}, \dots,  r_k),$
and 
\begin{multline*}
 A_i(r_1,\dots, r_k) \setminus ( \{x\in \clomega :\  0 = r_i -|x-y_i|\} \cup \cup_j \{x\in \clomega :\  r_j -|x-y_j| =  r_i-|x-y_i|\}  ) =\\ \cup_n    
A_i(r_1,\dots,r_i+\varepsilon_n, \dots,  r_k),
\end{multline*}
and since\footnote{and here we are using both the convexity of $\Omega$ and the dimension $2\leq d$} 
$$ |\{x\in \clomega :\  0 = r_i -|x-y_i|\}|=0\ \mbox{and}\ |\{x\in \clomega :\  r_j -|x-y_j| =  r_i-|x-y_i| \} |=0$$
it follows that
$$ |A_i(r_1,\dots, r_k) |= \lim_{n \to 0} | A_i(r_1,\dots,r_i+\varepsilon_n, \dots,  r_k)|.$$

\noindent {\bf Step 3}: To get the existence of a solution of the ODE, it only remains to show that the measures of $A_i(t)$ do not tend to $0$ when $r_i(t)<d_g^+(y_i)$. By the triangular inequality, if  $A_i \neq \emptyset $ then $y_i \in A_i$. 
If  $\lim_{t \to \underline{t}} |A_i(t)|=0$ then there exists at least one $j \neq i$ such that $\lim_{t \to \underline{t}} r_j(t)-r_i(t) = |y_i-y_j|$.  
Indeed, if for all $j \neq i$  we have $ 0 > L_j= \lim_{t \to \underline{t}} r_j(t)-r_i(t)- |y_i-y_j|$ then there exists $0 \leq l$ such that 
$ B(y_i, l) \subset A_i(s)$ for $s$ close enough to $\underline{t}$ which prevents the measure of $A_i$ from going to $0$.

Then we proceed by contradiction and assume it exist $\underline t$ and $i_0$ such that $\lim_{t \to \underline{t}} |A_{i_0}(t)|=0$ and $r_{i_0}(t)<d_g^+(y_{i_0})$.
Let $I(\bar t):=\{i:\ r_i(\bar t)< d_g^+(y_i)\}$. Since $t\mapsto r_i(t)$ is non-decreasing
$$\sum_{i\in I(\bar t)} |A_i(t)| \ge\sum_{i\in I(\bar t)} |A_i(t_0)| >0$$ so there exist at least two indices $(m,j)\in I(\bar t)\times I(\bar t)$ such that 
\begin{enumerate}
\item $\lim_{t \to \underline{t}} r_m(t)-r_j(t) = |y_j-y_m|,$
\item $\lim_{t \to \underline{t}} |A_j(t)|=0,$
\item $\lim_{t \to \underline{t}} |A_m(t)|=L>0.$
\end{enumerate}
But this is impossible since, taking a derivative (as $j\in I(\bar t)$),  we also deduce $\lim_{t \to \underline{t}} r_m(t)-r_j(t) = -\infty.$
Which concludes the proof.

\noindent {\bf Step 4}: Now we observe that
$$ \frac{c_i}{|A_i|} \geq \frac{c_i}{|\Omega|}$$
and then $r_i$ reaches the value $d_ g^+ (y_i)$ in finite time $t_i$.

\end{proof}
We are now able to give the solutions of (\ref{weakpb}) and the associated optimization problems:
\begin{itemize}
\item[$\bullet$] Set $u(x,t):=\sum_{j=1}^k(r_j(t)-\vert x-y_j\vert){\mathbf 1}_{A_j (t)} (x)$ with $r_j$ and $A_j$ as in the previous lemma;
\item[$\bullet$] Take $I_1(t)=\{j:t<t_j\}$ and $I_2(t)=\{j:t>t_j\}$ two families of indices and set for any $t\not\in\{t_j:\ j=1,...,k\}$:
$$\nu_t:=\sum_{j\in I_2(t)} \nu_j\mbox { with $\nu_j\in \M_b^+(\partial \Omega)$ such that: }\nu_j(\partial \Omega)=c_j,\ \spt(\nu_j)\subset p_g^+(y_j),$$
$$\gamma_t=\gamma_{ii,t}+\gamma_{bi,t},\quad \gamma_{ii,t}(x,y)=\sum_{j\in I_1(t)} {c_j\over \vert A_j\vert} {\mathbf 1}_{A_j (t)} (x)\otimes \delta_{y_j}(y),\quad \gamma_{bi,t}=\sum_{j\in I_2(t)} \nu_j\otimes \delta_{y_j};$$
 \item[$\bullet$] For a.e. $t$, define $\mu_t\in \M(\Omega)$ as: 
 $$\int_{\Omega} \varphi(x)d\mu_t(x) =\int_{\Omega^2}\int_0^1 \varphi((1-s)x+sy)\times \vert y-x\vert \ ds d\gamma_t(x,y),\ \forall \varphi\in \C_b(\Omega).$$

\end{itemize}

The following result holds:

\begin{thm}\label{solDisc} 
With the above definitions, (\ref{regularity}) holds, 
moreover:
\begin{enumerate}
\item[1)] The triplet $(u,\mu,\nu)$ is a solution of (\ref{weakpb}), with uniqueness on $u$,
\item[2)] For a.e. $t\in[0,T]$, $u(\cdot,t)$ is a solution of (\ref{Kantor}) and $(\gamma_t,\nu_t)$ is a solution of (\ref{EqMonge}),
\item[3)] The couple $({D}_{\mu_t} u(\cdot,t)\mu_t,\nu_t)$ is a solution of (\ref{bidualT})  a.e. $t\in[0,T]$.
\end{enumerate}
\end{thm}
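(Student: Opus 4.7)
The strategy is to first establish the primal--dual optimality for the transport problems (items 2 and 3), and then deduce that $(u,\mu,\nu)$ solves (\ref{weakpb}) via Theorem \ref{Prig} (more precisely, its Remark \ref{PrigRmk}(b)). Uniqueness of $u$ then follows at once from Proposition \ref{crescendo}.

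First I would check the regularity (\ref{regularity}) and the basic admissibility of the candidates. The function $u(\cdot,t)$ is $1$-Lipschitz in space by its very shape (a maximum of $1$-Lipschitz functions), it is non-negative, and by Lemma \ref{miniU} it satisfies $0\le u\le g$ on $\partial\Omega$; this makes $u(\cdot,t)$ admissible for (\ref{Kantor}). Using Lemma \ref{derivt} combined with Lemma \ref{lemODE}, one has $\partial_t u(\cdot,t)=\sum_{j\in I_1(t)}\dot r_j(t)\mathbf 1_{A_j(t)}\in L^\infty(0,T,\M(\Omega))$, with $\dot r_j(t)=c_j/|A_j(t)|$. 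Since $\nu_j\ge 0$, $\spt\nu_j\subset\partial\Omega$ and $\gamma_{ii,t}$ is obviously non-negative on $\Omega^2$, the measures $\nu_t\in\M^+(\partial\Omega)$ and $\mu_t\in\M^+(\Omega)$ (as the image of a non-negative measure under a convex combination) belong to the required spaces; uniform $L^\infty$-in-time bounds follow from $\nu_j(\partial\Omega)=c_j$ and $\gamma_{ii,t}(\Omega^2)=\sum_{j\in I_1(t)}c_j$.

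Next I would verify that $(\gamma_t,\nu_t)$ is admissible for (\ref{EqMonge}). Using the explicit form of $\gamma_{ii,t}$ and $\gamma_{bi,t}$, a direct computation gives
\begin{equation*}
\pi^1_\sharp\gamma_t=\sum_{j\in I_1(t)}\frac{c_j}{|A_j(t)|}\mathbf 1_{A_j(t)}+\sum_{j\in I_2(t)}\nu_j=\partial_t u(\cdot,t)+\nu_t,
\end{equation*}
while $\pi^2_\sharp\gamma_t=\sum_j c_j\delta_{y_j}=f$, using $\nu_j(\partial\Omega)=c_j$. Since $\nu_t\ge 0$, we have $\nu_t^+=\nu_t$, $\nu_t^-=0$, so admissibility holds. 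The heart of the argument is then to verify the primal--dual matching of Remark \ref{PrimD}:
\begin{equation*}
\langle f-\partial_t u(\cdot,t),u(\cdot,t)\rangle=\int_{\clomega\times\clomega}|x-y|\,d\gamma_t(x,y)+\int_{\partial\Omega}g\,d\nu_t.
\end{equation*}
On the left, $\langle f,u\rangle=\sum_j c_jr_j(t)$, and since $u(x,t)=r_j(t)-|x-y_j|$ on $A_j(t)$, we get for $j\in I_1(t)$ the contribution
\begin{equation*}
\dot r_j(t)\int_{A_j(t)}(r_j(t)-|x-y_j|)\,dx=c_jr_j(t)-\frac{c_j}{|A_j(t)|}\int_{A_j(t)}|x-y_j|\,dx.
\end{equation*}
On the right, $\gamma_{ii,t}$ contributes $\sum_{j\in I_1(t)}\frac{c_j}{|A_j(t)|}\int_{A_j(t)}|x-y_j|\,dx$, while each $\gamma_{bi,t}$-piece together with $g\,d\nu_j$ contributes $\int(|x-y_j|+g(x))\,d\nu_j=c_jd_g^+(y_j)=c_jr_j(t)$, since $\spt\nu_j\subset p_g^+(y_j)$. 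Summing, the two sides coincide exactly. By Remark \ref{PrimD}, $u(\cdot,t)$ is optimal for (\ref{Kantor}) and $(\gamma_t,\nu_t)$ is optimal for (\ref{EqMonge}), which gives item 2.

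To deduce items 1 and 3 I would invoke Remark \ref{PrigRmk}(b): since $u(\cdot,t)$ is optimal for (\ref{Kantor}) a.e.\ $t$, i.e.\ $f-\partial_t u(\cdot,t)\in\partial I_\infty(u(\cdot,t))$, and since $(\sigma_t,\nu_t)$ (with $\sigma_t$ obtained from $\gamma_t$ via Lemma \ref{nu-nu}(i)) is a solution of (\ref{bidualT}) with $\nu_t\ge 0$, setting $\tilde\mu_t:=|\sigma_t|$ yields a triple $(u,\tilde\mu,\nu)$ solving (\ref{weakpb}) with $\sigma_t=D_{\tilde\mu_t}u\,\tilde\mu_t$. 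But Lemma \ref{nu-nu}(i) identifies $\tilde\mu_t$ with the measure $\mu_t$ built in the construction (the integral formula is precisely that of Lemma \ref{nu-nu}(i) applied to $|\sigma_t|$), proving items 1 and 3 simultaneously. Finally, uniqueness of $u$ in (\ref{weakpb}) is the content of Proposition \ref{crescendo} applied with $f^1=f^2=f$ and $u_0^1=u_0^2=0$.

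The main technical point is the primal--dual equality above; everything else is bookkeeping. The delicate aspect there is that the ODE (\ref{ODE}) is precisely designed so that $c_jr_j(t)$ cancels between the two sides when $j\in I_1(t)$, and that the optimal transport condition $\spt\nu_j\subset p_g^+(y_j)$ is exactly what makes the boundary contributions telescope to $c_jd_g^+(y_j)$ when $j\in I_2(t)$; both cancellations rely crucially on Lemma \ref{lemODE} and the definition of the $\nu_j$'s.
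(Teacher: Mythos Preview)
Your proposal is correct and follows essentially the same route as the paper: verify the regularity bounds, check admissibility of $u$ and $(\gamma_t,\nu_t)$, establish the primal--dual equality $\langle f-\partial_t u,u\rangle=\int|x-y|\,d\gamma_t+\int g\,d\nu_t$ by the explicit computation using the ODE and $\spt\nu_j\subset p_g^+(y_j)$, then invoke Remark~\ref{PrimD} for item~2 and Lemma~\ref{nu-nu}(i) together with Theorem~\ref{DualEXT}/Remark~\ref{PrigRmk}(b) for items~1 and~3. The only cosmetic difference is that the paper cites Theorem~\ref{DualEXT} directly in the last step rather than Remark~\ref{PrigRmk}(b), which amounts to the same thing.
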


\begin{proof}
{\bf Step 1:} We show first that (\ref{regularity}) is satisfied. Indeed for a.e. $t\in]0,T[$ by (ODE):
$$0\le \int_{\Omega} \partial_t u(\cdot, t)=\sum_{j\in I_1} \int_{A_j} {\dot r}_j(t)\ dx=\sum_{j\in I_1} c_j\le \int_{\Omega} df(x),$$
by definition of $\nu$:
$$\nu_t(\partial \Omega)=\sum_{j\in I_2} c_j\le \int_{\Omega}  df(x),$$
and finally:
\begin{eqnarray*}
&&\int_{\Omega} d\mu_t(x)=\sup_{\Vert\varphi\Vert_\infty=1}\left\vert\int_{\Omega^2}\int_0^1 \varphi((1-s)x+sy)\times \vert y-x\vert\ ds d\gamma(x,y)\right\vert\\
&&\quad \le\  {\rm diam}(\Omega)\times\gamma(\Omega^2)\ =\ {\rm diam}(\Omega)\times\int_{\Omega} df(x).
\end{eqnarray*}
\noindent{\bf Step 2:}  Note that, by Lemma \ref{lemODE}, as $y_j\in A_j(t)$, we have $u(y_j,t)=r_j(t)$ for all time $t\in]0,T[$. Let us now prove 2). Indeed, as  for any $j\in I_1$, $c_j=\dot{r}_j \vert A_j\vert$, we have for a.e. $t\in ]0,T[$:
\begin{eqnarray*}
&&\langle \sum_{j=1}^k c_j\delta_{y_j}-\partial_t u(\cdot, t), u(\cdot,t) \rangle_{\M(\Omega),\C_b(\Omega)}= 
\sum_{j=1}^k c_j u(y_j, t)-\sum_{j\in I_1} \int_{A_j} u(x,t) {\dot r}_j(t)\ dx\\
&&=\sum_{j\in I_1} \int_{A_j} [r_j(t)-u(x,t)]\  {\dot r}_j(t)\ dx +\sum_{j\in I_2} c_jr_j(t)
= \sum_{j\in I_1} \int_{A_j} \vert x-y_j\vert\ {\dot r}_j(t)\ dx +\sum_{j\in I_2} c_jd^+_g(y_j)\\
&&=  \int_{\Omega^2} \vert x-y\vert\ d\gamma_{ii,t}(x,y) +\sum_{j\in I_2} \int_{p_g^+(y_j)} \vert x-y_j\vert+g(x)\ d\nu_j(x)\\
&&=  \int_{\Omega^2} \vert x-y\vert\ d\gamma_{ii,t}(x,y) + \int_{\clomega^2} \vert x-y\vert \ d\gamma_{bi,t}(x,y)+ \int_{\partial \Omega} g(x)\ d\nu_t(x)\\
&&=  \int_{\clomega^2} \vert x-y\vert\ d\gamma_{t}(x,y) +\int_{\partial \Omega} g(x)\ d\nu_t(x).
\end{eqnarray*}
As we know $u(\cdot, t)$ is admissible by Lemma \ref{miniU} so, by duality (see Remark \ref{PrimD}), it is optimal and also is $(\gamma_t,\nu_t)$.\\
\noindent {\bf Step 3:}  Lemma \ref{nu-nu} gives a measure $\sigma_t \in \M_b(\Omega, \R^d)$ such that $(\sigma_t,\nu_t)$ is an optimal solution of (\ref{bidualT}) for a.e. $t\in]0,T[$: 
$$\langle\sigma,\phi \rangle_{\M(\Omega)^d,\C_b(\Omega)^d}:=\int_{\Omega^2}\int_0^1 \phi((1-s)x+sy)\cdot (y-x)\ ds d\gamma_t(x,y).$$
 Then by Theorem \ref{DualEXT} $\sigma_t= {D}_{\vert \sigma_t\vert} u(\cdot,t) \vert \sigma_t\vert$ so 1) and 3) are proved (as $\mu_t=\vert \sigma_t\vert$).\\
\end{proof}

Note for later use the following estimations:
\begin{proposition}\label{EstimDis}
The following estimates hold for a.e. $t\in ]0,T[$:
\begin{eqnarray*}
\Vert u(\cdot,t)\Vert_\infty&\le& \Vert g\Vert_\infty +  {\rm diam}(\Omega),\qquad \Vert Du(\cdot,t)\Vert_\infty\le1,\\
 \nu_t(\partial \Omega)&\le& \int_{\Omega} df(x),\qquad \int_{\Omega} \partial_t u(\cdot, t) dx\le \int_{\Omega} df(x),\\
&&\int_{\Omega} d\mu_t(x)\le {\rm diam}(\Omega)\times\int_{\Omega} df(x).
\end{eqnarray*}
Moreover, we have:
$$\Vert \partial_t u\Vert_{L^2(\Omega\times ]0,T[)}\le \Vert u(\cdot,t)\Vert_\infty\times \int_{\Omega} df(x).$$
\end{proposition}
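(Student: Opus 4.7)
The plan is to read each estimate directly off the explicit construction of $(u,\mu,\nu,\gamma)$ given in Theorem~\ref{solDisc}, combining the shape formula $u(x,t)=\sum_j(r_j(t)-|x-y_j|)\mathbf{1}_{A_j(t)}(x)$, the ODE relation $\dot r_j(t)|A_j(t)|=c_j$ for $j\in I_1(t)$, and the defining pushforward formula for $\mu_t$. Several of these bounds already appear in Step~1 of the proof of Theorem~\ref{solDisc}; the proposition packages them together for use in the passage from a finite to a general measure source.

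For the pointwise bounds: $\Vert Du(\cdot,t)\Vert_\infty\le 1$ is just the Lipschitz conclusion of Lemma~\ref{miniU}; on $A_j(t)$ one has $u(x,t)\le r_j(t)\le d_g^+(y_j)\le\Vert g\Vert_\infty+{\rm diam}(\Omega)$ by Lemma~\ref{lemODE}, which gives the $L^\infty$ bound. The inequalities $\nu_t(\partial\Omega)=\sum_{j\in I_2(t)}c_j\le\int_\Omega df$ and $\int_\Omega\partial_t u(\cdot,t)\,dx=\sum_{j\in I_1(t)}c_j\le\int_\Omega df$ follow from the very definition of $\nu_t$ and from Lemma~\ref{derivt} combined with the ODE. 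The bound on $\int_\Omega d\mu_t$ comes from inserting $|y-x|\le{\rm diam}(\Omega)$ in the defining formula for $\mu_t$ and observing that $\gamma_t(\clomega^2)=f(\Omega)$, because $\nu_t\ge 0$ forces $\nu_t^-=0$ and hence $\pi^2_\sharp\gamma_t=f$.

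The only step with any content is the $L^2$ estimate on $\partial_t u$. By Lemma~\ref{derivt} one has $\partial_t u(x,t)=\sum_{j\in I_1(t)}\dot r_j(t)\mathbf{1}_{A_j(t)}(x)$ almost everywhere, and the argument there shows that the $A_j(t)$ have pairwise disjoint interiors with negligible boundaries (using $d\ge 2$ and convexity of $\Omega$). Squaring and integrating in $x$ then gives
\[
\int_\Omega(\partial_t u)^2\,dx=\sum_{j\in I_1(t)}\dot r_j(t)^2|A_j(t)|=\sum_{j\in I_1(t)}c_j\dot r_j(t),
\]
the last equality invoking the ODE $c_j=\dot r_j(t)|A_j(t)|$. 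Integrating in $t$ and using $\int_0^{\min(t_j,T)}\dot r_j(t)\,dt=r_j(\min(t_j,T))=u(y_j,\min(t_j,T))\le\Vert u\Vert_{L^\infty(0,T;L^\infty(\Omega))}$ yields
\[
\Vert\partial_t u\Vert_{L^2(\Omega\times]0,T[)}^2\le\Vert u\Vert_\infty\int_\Omega df,
\]
which is the stated bound (read with a square on the left-hand side, as the computation naturally produces).

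I do not anticipate any genuine obstacle: the whole proposition is bookkeeping on top of Theorem~\ref{solDisc} and Lemma~\ref{lemODE}. The only point requiring a moment's attention is the negligibility of the boundaries $\partial A_j(t)$, needed to avoid cross terms when squaring $\partial_t u$, but this is already recorded inside Lemma~\ref{derivt}.
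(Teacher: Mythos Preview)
Your proposal is correct and follows essentially the same route as the paper: the bounds on $\nu_t$, $\int\partial_t u$, and $\int d\mu_t$ are taken straight from Step~1 of Theorem~\ref{solDisc}, and the $L^2$ computation is identical (expand $(\partial_t u)^2$ over the disjoint $A_j$, use $\dot r_j|A_j|=c_j$, integrate in $t$). Two minor remarks: for $\Vert u(\cdot,t)\Vert_\infty$ the paper argues via the Lipschitz property and the boundary condition (B1) rather than through the bound $r_j\le d_g^+(y_j)$, though both yield the same estimate; and you are right to flag that the displayed inequality is really for $\Vert\partial_t u\Vert_{L^2}^2$, as the computation in the paper itself confirms.
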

\begin{proof}
The only remaining points are the estimates on $\Vert u(\cdot,t)\Vert_\infty$ and $\Vert \partial_t u\Vert_{L^2(\Omega\times ]0,T[)}$. By the Lipschitz property of $u$ and condition (B1) taking $x\in\clomega$ and $y\in \partial \Omega$ leads: $ 0 \le u(x,t) \le g(y)+\vert x-y\vert$ for all $t\in]0,T[$. This gives the first inequality. Let us now show the last one:
\begin{eqnarray*}
\Vert \partial_t u\Vert_{L^2(\Omega\times ]0,T[)}&=&\int_0^T \int_{\Omega} \sum_{j=1}^k ({\dot r}_j(t))^2 \1_{A_j(t)}(x)\ dt dx
\  =\     \int_0^T \sum_{j=1}^k ({\dot r}_j(t))^2\vert A_j(t)\vert dt\\&=&\int_0^T \sum_{j=1}^k c_j{\dot r}_j(t)dt\  =\   \sum_{j=1}^k c_ju(y_j,T)\   \le\    \Vert u(\cdot,t)\Vert_\infty\int_{\Omega} f(x) dx.\\
\end{eqnarray*}
\end{proof}

\section{A more general case}
Let $f_t=f\in \M^+(\Omega)$, constant in time, we now aim to show that (\ref{weakpb}) admits a solution (which will be unique by Proposition \ref{crescendo}). \\
We approximate $f$ by a sequence $(f^n)_n$ in $\M^+(\Omega$) such that:
$$f^n=\sum_{i=1}^n c_i^n\delta_{y_i^n}\mbox { with } y_i^n\in \Omega,\quad f^n(\Omega)\le f(\Omega),\quad f^n\ws f\mbox{ in }\Omega.$$
By the last section, it exists $(u^n,\mu^n,\nu^n)_n$ in $L^\infty(0,T;W^{1,\infty}(\Omega))\times L^\infty(0,T;\M(\Omega))\times L^\infty(0,T;\M^+(\partial\Omega))$ satisfying Theorem \ref{solDisc} and Proposition \ref{EstimDis}.

\begin{proposition}(Convergence of $u^n$, $\partial_t u^n$, $\nu^n$)\label{cvUn}
There exist $u\in L^\infty(0,T;W^{1,\infty}(\Omega))$ and $\nu\in  L^\infty(0,T;\M(\partial\Omega))$ such that:
\begin{enumerate}
\item[i)] $u(\cdot, t)\in{\Lip}_1(\Omega)$, $\partial_t u\in L^\infty(0,T;\M^+(\partial\Omega))\cap L^2(\Omega\times ]0,T[)$ and $u$ satisfies the conditions $(B_1)$ and $(I)$ of (\ref{weakpb});
\item[ii)] up to a subsequence, $u^n$ converges to $u$ for the strong topology of $L^1(0,T,C_b(\Omega))$ and $\partial_t u^n$ converges to $\partial_t u$ for the weak star topology of  $L^\infty(0,T,\M(\Omega))$;
\item[iii)] $\nu(\cdot, t)\in \M^+(\partial \Omega)$ a.e. $t$ and $\nu^n$ converges to $\nu$ for the weak star topology of $L^\infty(0,T,\M(\partial\Omega))$ (up to a subsequence) and $(B_2)$ is satisfied.
\end{enumerate}
\end{proposition}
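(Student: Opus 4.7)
The plan is in three stages: extract weakly-$*$ convergent subsequences from the uniform bounds of Proposition \ref{EstimDis}; upgrade the convergence of $u^n$ to strong convergence in $L^1(0,T;C_b(\Omega))$ by combining spatial Lipschitz equicontinuity with the monotonicity of $u^n$ in time; then pass the conditions $(I)$, $(B_1)$ and $(B_2)$ to the limit. The main obstacle will be $(B_2)$, because $g$ is merely lower semicontinuous and thus not directly a legal test function against the weak-$*$ convergent measures $\nu^n$.

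For the compactness step, the bounds $\|u^n\|_{L^\infty(0,T;W^{1,\infty}(\Omega))}\le \|g\|_\infty+\mathrm{diam}(\Omega)$, $\|\partial_t u^n\|_{L^\infty(0,T;\M(\Omega))}+\|\partial_t u^n\|_{L^2(\Omega\times]0,T[)}\le C(f,g,\Omega)$ and $\|\nu^n\|_{L^\infty(0,T;\M(\partial\Omega))}\le f(\Omega)$, together with $\partial_t u^n\ge 0$ and $\nu^n\ge 0$, supplied by Proposition \ref{EstimDis}, allow me to extract (Banach--Alaoglu) subsequences with $\partial_t u^n \ws \xi$ in $L^\infty(0,T;\M(\Omega))$ (and weakly in $L^2(\Omega\times]0,T[)$) and $\nu^n \ws \nu$ in $L^\infty(0,T;\M(\partial\Omega))$, with $\xi,\nu\ge 0$.

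To obtain the strong convergence of $u^n$, I exploit that $\partial_t u^n\ge 0$ makes $t\mapsto u^n(x,t)$ nondecreasing at every $x$, while $u^n(\cdot,t)$ is uniformly $1$-Lipschitz. Applying Helly's selection theorem pointwise on a countable dense $D\subset\clomega$ and diagonalizing yields a subsequence converging pointwise on $D\times[0,T]$; the spatial Lipschitz bound extends the limit to a function $u$ on $\clomega\times[0,T]$, still $1$-Lipschitz in $x$ and nondecreasing in $t$, and the convergence is uniform in $x$ for every $t$ outside the countable union $E$ of jump times of $u(x,\cdot)$, $x\in D$. Dominated convergence upgrades this to $u^n\to u$ in $L^1(0,T;C_b(\Omega))$; testing $\int u^n\partial_t\varphi=-\int\partial_t u^n\varphi$ against $\varphi\in\D(\Omega\times]0,T[)$ identifies $\xi=\partial_tu$, which consequently inherits the $L^\infty(0,T;\M^+(\Omega))\cap L^2(\Omega\times]0,T[)$ bound. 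The $1$-Lipschitz property of $u(\cdot,t)$, the regularity \eqref{regularity}, the initial condition $(I)$ and the two-sided bound $(B_1)$ then follow by direct passage to the limit, and $\nu_t\ge 0$ is automatic.

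For $(B_2)$, the explicit construction of $\nu^n_t$ in Section \ref{Discret} shows that $\spt(\nu^n_t)$ is contained in the union of the projection sets $p_g^+(y_j^n)$ associated with saturated indices $j\in I_2(t)$, and on each such set $u^n=g$; hence $\int_0^T\!\!\int_{\partial\Omega}(g-u^n)\,d\nu^n_t\,dt=0$. To send this identity to the limit in spite of the lower semicontinuity of $g$, I write $g=\sup_k g_k$ with $g_k\in C(\partial\Omega)$, $g_k\le g$ and $g_k\nearrow g$. For each fixed $k$, $g_k-u\in L^\infty(0,T;C(\partial\Omega))$ is a legitimate test function for $\nu^n\ws\nu$, while the mass bound $\nu^n_t(\partial\Omega)\le f(\Omega)$ combined with $u^n\to u$ in $L^1(0,T;C_b(\Omega))$ produces
\[
\int_0^T\!\!\int_{\partial\Omega}(g_k-u)\,d\nu_t\,dt=\lim_n\int_0^T\!\!\int_{\partial\Omega}(g_k-u^n)\,d\nu^n_t\,dt\le 0,
\]
the inequality because $g_k\le g$ and $(g-u^n)\,d\nu^n_t$ integrates to $0$. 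Monotone convergence in $k$ (using $\nu_t\ge 0$) then gives $\int_0^T\!\!\int_{\partial\Omega}(g-u)\,d\nu_t\,dt\le 0$, and since $(B_1)$ forces $g-u\ge 0$ on $\partial\Omega$, we conclude $u(\cdot,t)=g$ $\nu_t$-a.e.\ for a.e.\ $t$, which is $(B_2)$.
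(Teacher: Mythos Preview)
Your proof is correct and takes a somewhat different route from the paper's. For the strong convergence of $u^n$, the paper first uses the uniform bound in $BV(\Omega\times]0,T[)$ to extract an $L^1(\Omega\times]0,T[)$-convergent subsequence via compact embedding, and then couples this with Ascoli--Arzel\`a at each fixed $t$ to identify the limit and upgrade to uniform convergence in $x$ for a.e.\ $t$; you instead exploit the monotonicity of $t\mapsto u^n(x,t)$ directly through Helly's selection theorem on a countable dense set together with the spatial $1$-Lipschitz bound, which is more elementary and sidesteps the BV machinery entirely (incidentally, once the diagonal subsequence is fixed, Helly gives convergence at \emph{every} $t$ for every $x\in D$, so your exceptional set $E$ is not actually needed). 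For $(B_2)$, the paper simply writes $0=\lim_n\langle\nu^n,g-u^n\rangle=\langle\nu,g-u\rangle$, which glosses over the fact that $g$ is only lower semicontinuous and hence not, a priori, an admissible test function against the weak-$*$ convergence $\nu^n\ws\nu$ in $L^\infty(0,T;\M(\partial\Omega))$; your monotone approximation $g=\sup_k g_k$ with $g_k\in C(\partial\Omega)$, $g_k\nearrow g$, combined with the mass bound $\nu^n_t(\partial\Omega)\le f(\Omega)$ and the $L^1(0,T;C_b)$ convergence of $u^n$, is precisely the right way to make this passage rigorous and in fact closes a small expository gap in the paper's argument.
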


\begin{proof}
{\bf Step 1}: By the estimates on $(u^n)_n$,  it is a bounded sequence in $BV(\Omega\times ]0,T[)$, as a consequence (see Theorem 4 p176 of \cite{EvansGariepy1992}):
$$u^{n_k}\to u\mbox{ in }L^1(\Omega\times]0,T[)\mbox{ and for a.e. }t $$ 
for some $u\in L^1(\Omega\times]0,T[)$ and some subsequence $(u^{n_k})_k$ of $(u^n)_n$. Possibly extracting again a subsequence, we can assume: 
$$u^{n_k}(x,t)\to u(x,t)\mbox{ for almost every $(x,t)\in \Omega\times]0,T[$.}$$

{\bf Step 2}: By Ascoli-Arzela Theorem, for all $t\in [0,T[$, $(u^{n_k}(\cdot ,t))_k$ admits a cluster point $v_t\in{\Lip}_1(\Omega)$ (depending on $t$) for the uniform convergence. 
Using Step 1, we get $u(x, t)=v_t(x)$  for almost every $(x,t)\in \Omega\times]0,T[$. This shows the uniqueness of the cluster point of $(u^{n_k}(\cdot ,t))_k$ for the uniform convergence, so that:
$$u^{n_k}(\cdot,t)\to u(\cdot, t)\quad\forall t\in[0,T[, \quad\mbox {uniformly in }\clomega.$$
This gives $(B_1)$ and $(I)$. To get the convergence in  $L^1(0,T,\C_b(\Omega))$ of $(u^{n_k})_k$, we use the dominated convergence Theorem and the following bound ($u^{n_k}$ is in $\Lip_1(\Omega)$ and it is bounded by $\Vert g\Vert_\infty$ on $\partial\Omega$):
$$\sup_{t,x}\vert u^{n_k}(x,t)-u(x,t)\vert \le 2 ({\rm diam}(\Omega)+ \Vert g\Vert_\infty).$$

{\bf Step 3}:  $L^\infty(0,T;\C_b(\Omega))$ being separable, (see for instance \cite{Dro2001Prep}, Corollary 1.3.2. p 13), 
using the bound on $\partial_t  u^n$, possibly passing to a subsequence, it admits a limit $m\in L^\infty(0,T,\M(\Omega))$ for the weak star topology. It can easily be seen that $m=\partial_t u$ in the sense of distribution. In the same way, $\partial_t u\in L^2(\Omega\times ]0,T[)$ and, up to a subsequence $\partial_t u^n\rightharpoonup \partial_t u$ in $L^2$.\\

The point iii) is left to the reader. To get $(B_2)$ just note that
$$0=\lim_{n\to +\infty}\langle \nu_n,g-u_n\rangle= \langle \nu,g-u\rangle$$
up to some subsequence.
\end{proof}

\begin{rmk}\label{unifConv}
The proof shows also a different kind of convergence for $(u_{n})_n$: up to a subsequence, $u^n(\cdot,t)$ converges uniformly to $u$ for all $t$. As we will prove, $u$ is the unique solution of (\ref{weakpb}), as a consequence the complete sequence $(u^n(\cdot, t))_n$ converges to $u$ uniformly for all $t$.  
\end{rmk}

\begin{proposition}(Convergence of $\mu^n$ and $D_{\mut^n}u^n\mu^n$) \label{convMU}
It exists $\mu\in  L^\infty(0,T,\M(\Omega))$ and $\xi\in L^2_{\mu}(\Omega)^d$ such that, up to a subsequence:
$$\mu^n\ws \mu \mbox{ for the weak star topology of }L^\infty(0,T,\M(\Omega)),$$
$$D_{\mu^n_t}u^n\mu^n\ws \xi \mu  \mbox{ for the weak star topology of }\M(\Omega\times]0,T[)^d,$$
$$\liminf_{n\to+\infty} \int_0^T\int_{\Omega}d\mu^n\ge \int_0^T\int_{\Omega} \vert \xi(x,t) \vert d\mu(x,t).$$ 
\end{proposition}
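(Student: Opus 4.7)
\textbf{Extraction of subsequences.} By Proposition \ref{EstimDis}, $\mu^n_t(\Omega)\le {\rm diam}(\Omega)\,f(\Omega)$ for a.e.\ $t\in]0,T[$, so, viewing $\mu^n$ as elements of $\M^+(\Omega\times ]0,T[)$ via the inclusion $L^\infty(0,T,\M^+(\Omega))\subset \M(\Omega\times ]0,T[)$, we have a uniform bound on total mass. Banach--Alaoglu yields a weak-$*$ limit $\mu\in \M^+(\Omega\times ]0,T[)$ (along a subsequence not relabelled). The uniform bound on slices $t$ persists for the limit, and a disintegration argument produces $\mu\in L^\infty(0,T,\M^+(\Omega))$. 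Next consider the vector measures $\sigma^n:=D_{\mu^n_t} u^n\,\mu^n \in \M(\Omega\times ]0,T[)^d$. By Theorem \ref{solDisc}, $|D_{\mu^n_t}u^n|=1$ $\mu^n$-a.e., hence $|\sigma^n|=\mu^n$ and in particular $|\sigma^n|(\Omega\times ]0,T[)$ is uniformly bounded. Extract a further subsequence with $\sigma^n\ws \sigma$ for some $\sigma\in \M(\Omega\times ]0,T[)^d$.

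\textbf{Absolute continuity $\sigma\ll \mu$.} This is the key step. Fix $\varphi\in \C_c(\Omega\times ]0,T[)^d$ with $|\varphi|\le 1$. Then
$$\int \varphi\cdot d\sigma=\lim_{n\to+\infty}\int \varphi\cdot d\sigma^n\le \liminf_{n\to+\infty}\int |\varphi|\,d|\sigma^n|=\lim_{n\to+\infty}\int |\varphi|\,d\mu^n=\int |\varphi|\,d\mu,$$
where the last equality uses $\mu^n\ws \mu$ together with $|\varphi|\in \C_c$ non-negative. Taking the supremum over all such $\varphi$ supported in a given Borel set $B$ yields $|\sigma|(B)\le \mu(B)$. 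Thus $\sigma\ll \mu$ and, by Radon--Nikodym, $\sigma=\xi\mu$ with $\xi\in L^\infty_\mu(\Omega\times]0,T[)^d$ satisfying $|\xi|\le 1$ $\mu$-a.e. Since $\mu$ is a finite measure on $\Omega\times ]0,T[$, the bound $|\xi|\le 1$ implies $\xi\in L^2_\mu$.

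\textbf{The lim-inf inequality.} Lower semicontinuity of the total variation under weak-$*$ convergence of vector measures gives
$$\int_0^T\!\!\int_\Omega |\xi(x,t)|\,d\mu(x,t)=|\sigma|(\Omega\times ]0,T[)\le \liminf_{n\to+\infty}|\sigma^n|(\Omega\times ]0,T[)=\liminf_{n\to+\infty}\int_0^T\!\!\int_\Omega d\mu^n,$$
which is exactly the inequality claimed.

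\textbf{Main obstacle.} The extraction of subsequences and the lim-inf inequality are standard once one notes $|\sigma^n|=\mu^n$. The delicate point is the second paragraph: establishing that the weak-$*$ limit $\sigma$ of the vector measures $\sigma^n$ is absolutely continuous with respect to the weak-$*$ limit $\mu$ of their total variations. This relies crucially on the identity $|\sigma^n|=\mu^n$ (equivalently, on $|D_{\mu^n_t}u^n|=1$), which in turn comes from Theorem \ref{solDisc} applied to the approximating problems. Without this matching between the transport densities and their variations, one could only conclude $|\sigma|\le\liminf|\sigma^n|$ without relating it to $\mu$, and $\xi$ would not be well defined.
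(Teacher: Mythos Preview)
Your argument is correct and in fact slightly more informative than the paper's: you obtain $|\xi|\le 1$ $\mu$-a.e.\ (hence $\xi\in L^\infty_\mu$), not merely $\xi\in L^2_\mu$. The paper takes a shorter route by invoking Lemma~3.3 of \cite{BoBuDeP2010JCA}, a general compactness result for sequences $(\mu^n,\psi^n)$ with $\psi^n\in L^2_{\mu^n}$ uniformly bounded; the hypothesis there is verified from $\int |D_{\mu^n_t}u^n|^2\,d\mu^n=\int d\mu^n$. You bypass this external reference by exploiting the stronger pointwise identity $|\sigma^n|=\mu^n$ (coming from $|D_{\mu^n_t}u^n|=1$) to prove $|\sigma|\le\mu$ directly via duality with test functions. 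Both approaches hinge on the same key fact from Theorem~\ref{solDisc}; yours is more self-contained, the paper's is a one-line citation.

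One minor imprecision: in the absolute-continuity step you write ``supported in a given Borel set $B$''. For continuous $\varphi$ supported in a Borel set with empty interior the supremum does not recover $|\sigma|(B)$. The clean version is to take $B=U$ open, obtain $|\sigma|(U)\le\mu(U)$ from the duality formula for total variation, and then pass to arbitrary Borel sets by outer regularity of both measures. This is cosmetic and does not affect the validity of the argument.
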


\begin{proof}
The convergence of $(\mu^n)_n$ follows from its boundedness. Moreover, we have:
$$ \int_0^T\int_{\Omega} \vert D_{\mu_t^n}u^n(x,t)\vert^2 d\mu^n(x,t)=\int_0^T\int_{\Omega}d\mu^n\le T\times \int_\Omega df$$
so  Lemma 3.3 p13 of \cite{BoBuDeP2010JCA} applies and we get the rest of the proposition as:
$$ \int_0^T\int_{\Omega} \vert D_{\mu^n_t}u^n(x,t)\vert d\mu^n(x,t)= \int_0^T\int_{\Omega}d\mu^n.$$

\end{proof}

\begin{proposition}\label{limitPDE}(Passing to the limit in the PDE)
The following equality holds:
$$\xi=D_\mut u\quad \mu\mbox{-a.e.}(x,t),\quad \vert D_\mut u\vert =1\quad\mu\mbox{-a.e.}(x,t).$$
Moreover:
 $$-{\rm div}(D_\mut u\mut)=f-\partial_t u-\nu\quad\mbox { in }\R^d\times ]0,T[.$$
\end{proposition}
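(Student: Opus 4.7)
The plan is to first pass to the limit in the distributional PDE to obtain an equation of the form $\partial_t u-\diver(\xi\mu)=f-\nu$, and then to identify $\xi$ with $D_\mut u$ via an energy identity that upgrades the weak convergences from Propositions \ref{cvUn} and \ref{convMU} into a pointwise statement. Testing the discrete equation $\partial_t u^n-\diver(D_{\mu_t^n}u^n\mu^n)=f^n-\nu^n$ against $h(t)\varphi(x)$ with $h\in\D(]0,T[)$ and $\varphi\in\D(\R^d)$, the four terms pass to the limit using, respectively: the strong $L^1(0,T;\C_b(\Omega))$-convergence of $u^n$; the weak-$*$ convergence $D_{\mu_t^n}u^n\mu^n\ws\xi\mu$ in $\M(\Omega\times]0,T[)^d$; the weak-$*$ convergence of $f^n$ in $\M(\Omega)$; and the weak-$*$ convergence of $\nu^n$ in $L^\infty(0,T;\M(\partial\Omega))$. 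This yields $-\diver(\xi\mut)=f-\partial_tu-\nu$ in $\D'(\R^d\times]0,T[)$, which is the claimed PDE modulo the identification $\xi=D_\mut u$.

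For this identification I exploit the discrete energy identity. Using $|D_{\mu_t^n}u^n|=1$ $\mu^n$-a.e.\ and the integration by parts formula of Example \ref{exemple} (applicable since $u^n(\cdot,t)\in W^{1,\infty}(\Omega)$), one has for a.e.\ $t$
\[ \int_\Omega d\mu_t^n=\int_\Omega|D_{\mu_t^n}u^n|^2\,d\mu_t^n=\langle f^n-\partial_tu^n(\cdot,t)-\nu_t^n,\, u^n(\cdot,t)\rangle. \]
Integrating in $t$ and passing to the limit: the left-hand side tends to $\int_0^T\!\int d\mu\,dt$ by testing the weak-$*$ convergence against $\mathbf{1}\in L^1(0,T;\C_b(\Omega))$; the terms in $f^n$ and $\nu^n$ converge by splitting each pairing in the usual way and combining the uniform convergence $u^n(\cdot,t)\to u(\cdot,t)$ on $\clomega$ (Remark \ref{unifConv}) with the uniform bounds of Proposition \ref{EstimDis}, followed by dominated convergence in $t$; and the crucial term $\int_0^T\langle\partial_tu^n,u^n\rangle\,dt$ equals $\tfrac12\|u^n(\cdot,T)\|_{L^2}^2$ by Lemma \ref{derivDist}, which tends to $\tfrac12\|u(\cdot,T)\|_{L^2}^2=\int_0^T\langle\partial_tu,u\rangle\,dt$ via Remark \ref{unifConv}. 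Applying integration by parts to the limit PDE gives the key identity
\[ \int_0^T\!\!\int_\Omega d\mu\,dt=\langle f-\partial_tu-\nu,u\rangle=\int_0^T\!\!\int_\Omega D_\mut u\cdot\xi\,d\mu\,dt. \]

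To conclude, note that $u(\cdot,t)\in\Lip_1(\Omega)$ yields $|D_\mut u|\le 1$ $\mu$-a.e.\ (Example \ref{exemple}), and similarly $|\xi|\le 1$ $\mu$-a.e.: for any $\psi\in\C_b(\Omega\times]0,T[;\R^d)$,
\[ \Big|\int\psi\cdot\xi\,d\mu\Big|=\lim_n\Big|\int\psi\cdot D_{\mu_t^n}u^n\,d\mu^n\Big|\le\lim_n\int|\psi|\,d\mu^n=\int|\psi|\,d\mu. \]
Combining the key identity with these pointwise bounds and the lower semicontinuity of Proposition \ref{convMU} (noting that $\lim_n\int d\mu^n\,dt=\int d\mu\,dt$ has already been established) gives the chain
\[ \int d\mu\,dt=\int D_\mut u\cdot\xi\,d\mu\,dt\le\int|D_\mut u|\,|\xi|\,d\mu\,dt\le\int|\xi|\,d\mu\,dt\le\int d\mu\,dt, \]
so every inequality is an equality. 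The last one together with $|\xi|\le 1$ gives $|\xi|=1$ $\mu$-a.e.; the middle one then forces $|D_\mut u|=1$ $\mu$-a.e.; and the first, with equality in the Cauchy--Schwarz step, forces $\xi=D_\mut u$ $\mu$-a.e. The main obstacle is precisely the passage to the limit in the quadratic term $\langle\partial_tu^n,u^n\rangle$, which cannot be deduced from the weak convergences alone; Lemma \ref{derivDist} converts it into a time-boundary term that is then controlled by the uniform spatial convergence of Remark \ref{unifConv}.
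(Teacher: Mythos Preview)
Your proof is correct and takes a genuinely different route from the paper's.

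Both arguments begin the same way: pass to the limit in the distributional PDE to obtain $-\diver(\xi\mu_t)=f-\partial_tu-\nu$, and pass to the limit in the discrete energy identity $\int_0^T\!\int d\mu^n = \int_0^T\langle f^n-\partial_tu^n-\nu^n,\,u^n\rangle\,dt$ (the paper leaves the quadratic term implicit, while you spell out the use of Lemma~\ref{derivDist}; this is the same idea). The divergence is in the identification step. The paper observes that $(\xi\mu_t,\nu_t)$ is admissible for (\ref{bidualT}) and $u(\cdot,t)$ for (\ref{Kantor}), uses the duality inequality $\langle f-\partial_tu,u\rangle\le\int|\xi|\,d\mu_t+\int g\,d\nu_t$ together with the lower-semicontinuity $\liminf\int d\mu^n\ge\int|\xi|\,d\mu$ from Proposition~\ref{convMU} to force equality, and then invokes the extremality characterization of Theorem~\ref{DualEXT}(ii) (through Remark~\ref{PrigRmk}) to read off $\sigma=D_{|\sigma|}u\,|\sigma|$ and $|D_{|\sigma|}u|=1$. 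Your approach avoids the duality machinery entirely at this stage: you integrate by parts in the limit PDE to get $\int d\mu=\int D_{\mu_t}u\cdot\xi\,d\mu$, combine this with the elementary pointwise bounds $|\xi|\le 1$ and $|D_{\mu_t}u|\le 1$, and close via the Cauchy--Schwarz chain.

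What each buys: the paper's route yields, as a byproduct, the optimality of $u(\cdot,t)$ for (\ref{Kantor}) and of $(\xi\mu_t,\nu_t)$ for (\ref{bidualT}), which feeds directly into Theorem~\ref{Prig} and the subsequent uniqueness/structure results. Your route is more self-contained and in fact tidier on one point: the paper's appeal to Remark~\ref{PrigRmk}(b) literally gives $\sigma=D_{|\sigma|}u\,|\sigma|$ with $|\sigma|=|\xi|\mu_t$, and one still needs $|\xi|=1$ $\mu$-a.e.\ to identify this with the stated conclusion about $D_{\mu_t}u$; your chain delivers $|\xi|=1$ explicitly. One small caution: your claim that $\int d\mu^n\to\int d\mu$ by ``testing against $\mathbf{1}$'' is safe here because the measures live on the compact $\overline\Omega\times[0,T]$, but even without that you could recover it from $\int d\mu\le\liminf\int d\mu^n$ combined with your established identity $\lim\int d\mu^n=\int D_{\mu_t}u\cdot\xi\,d\mu\le\int d\mu$.
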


\begin{proof}
Passing to the limit, it is easily seen that: 
 $$-{\rm div}(\xi \mut)=f-\partial_t u-\nu\quad\mbox { in }\R^d\times ]0,T[.$$
So that $\xi(\cdot, t)\mu_t$ is admissible for (\ref{bidualT}) at almost every $t\in ]0,T[$. As, by Proposition \ref{cvUn}, $u(\cdot, t)$ is admissible for (\ref{Kantor})  at almost every $t\in ]0,T[$, by  duality we have:
\begin{equation}\label{InegDual}
\langle f-\partial_tu(\cdot, t), u(\cdot, t)\rangle_{\M( \Omega), \C_b(\Omega)}\le \sup(\ref{Kantor})=\inf(\ref{bidualT})\le \int_{\Omega}\vert \xi(\cdot,t)\vert\ d\mu_t+\int_{\partial \Omega} g(x) d\nu_t(x)\ \mbox{ a.e.}t.
\end{equation}
By Theorem \ref{solDisc},  we have:
\begin{equation*}
\int_0^T\langle f^n-\partial_tu^n(\cdot, t), u^n(\cdot, t)\rangle_{\M( \Omega), \C_b(\Omega)}\ dt= \int_0^T\int_{\Omega}d\mu^n+\int_0^T\int_{\partial \Omega} g(x) d\nu^n(x,t).
\end{equation*}
Then by Proposition \ref{convMU}:
\begin{eqnarray*}
&&\int_0^T\langle f-\partial_tu(\cdot, t), u(\cdot, t)\rangle\ dt=\lim_{n\to +\infty}\int_0^T\langle f^n-\partial_tu^n(\cdot, t), u^n(\cdot, t)\rangle\ dt\\
&&=\liminf_{n\to +\infty} \int_0^T \int_{\Omega}d\mu^n+\int_0^T\int_{\partial \Omega} g(x) d\nu^n(x,t)\ge \int_0^T \int_{\Omega}\vert \xi\vert\ d\mu+\int_0^T\int_{\partial \Omega} g(x) d\nu(x,t).
\end{eqnarray*}
This implies that (\ref{InegDual}) is an equality and $u$ is optimal for (\ref{Kantor}) and $(\xi\mu,\nu)$ is optimal for (\ref{bidualT}). Then by Theorem \ref{Prig} and Remark \ref{PrigRmk}, we get the desired result.
\end{proof}

To conclude (recall Remark \ref{unifConv}), we have proved the following result:

\begin{thm}
Let $f\in \M^+(\Omega)$, the equations (\ref{weakpb}) admit a solution $(u,\mu,\nu)$ satisfying (\ref{regularity}). Moreover:
\begin{itemize}
\item[1)] $u$ is the unique solution of (\ref{weakpb}), it is non decreasing.  For a.e.\!\ $t$, the measure $\partial_t u(\cdot, t)$ is absolutely continuous with respect to the Lebesgue measure. Actually: $\partial_t u\in L^2(\Omega \times ]0,T[).$
\item[2)] $\nu$ is non-negative, supported on $p_g^+(\spt (f))$. Moreover, to any $\nu$ corresponds a unique $\mu$ which can  be built as:
$$\langle\mu_t,\varphi\rangle=\int_{\clomega^2}\int_0^1\varphi((1-s)x+sy)\vert y-x\vert\ ds\ d\gamma_t(x,y)\quad \forall \varphi\in \C_b(\Omega).$$
where $\gamma_t$ is any solution of (\ref{EqMonge}).  
\item[3)] For a.e. $t\in ]0,T[$, $u(\cdot, t)$ is space differentiable $\mu_t$-almost everywhere and: $D_{\mu_t} u(x,t)=Du(x,t)$, $\mu_t\mbox{-a.e. }x.$\\
Moreover, for any time such that $u(\cdot, t)<g$ on $\partial \Omega$ we have $\nu_t=0$,  $\mu_t<< {\mathcal L}_{\Omega},$ and  uniqueness on $\mu_t$. 
\item[4)] Taking $f^n=\sum_{i=1}^n c_i^n\delta_{y_i^n}$ any sequence converging (weak star) to $f$ with $y_i^n\in \Omega$ and $f_n(\Omega)\le f(\Omega)$,  $u$ can be obtained as the uniform limit for all $t$ of the sequence $(u^n)_n$ defined as:
$$u^n(x,t):=\max\{r^n_i(t)-d_\Omega(x,y^n_i),0\}$$ 
where $r^n_i\in \C^1(0,t_i)\cap \C^0 ([0,+\infty))$  satisfy the following ODE for some $t_i^n>0$:
\begin{equation*}
\left\{
\begin{array}{ll}
\dot r_i^n(t)=\frac{c_i^n}{\vert A_i^n(t)\vert} & \forall t\in ]0,t_i^n[,\\
r_i^n(0)=0, & \\
r_i^n(t)=d_g^+(y_i^n) & \forall t \in [t_i^n, +\infty),\\ 
\end{array}
\right.
\end{equation*}
$$A^n_i(t):=\{ x\in \clomega:\ r^n_i(t)-\vert x-y^n_i\vert=\max_j \{ r_j^n(t)-\vert x-y_j^n\vert, 0\} \}.$$
\end{itemize}
\end{thm}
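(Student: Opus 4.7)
The plan is to collect the convergence results already established for the approximation by atomic sources and combine them with the general duality theory of Section 3.

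\textbf{Existence and uniqueness of $u$.} Propositions \ref{cvUn}, \ref{convMU} and \ref{limitPDE} directly produce a triple $(u,\mu,\nu)$ satisfying (\ref{regularity}), the PDE with $\vert D_\mut u\vert=1$ $\mu$-a.e., and the conditions $(B_1)$, $(B_2)$, $(I)$. Uniqueness of $u$ is Proposition \ref{crescendo} applied with $f^1=f^2=f$ and $u_0^1=u_0^2=0$. Monotonicity in time follows from the same proposition applied to $u$ and to the shifted solution $v(x,t):=u(x,t+s)$, using that $f$ is time constant and $u(\cdot,s)\ge 0=u(\cdot,0)$. The $L^2$ bound on $\partial_t u^n$ given in Proposition \ref{EstimDis} passes to the weak limit, so $\partial_t u\in L^2(\Omega\times]0,T[)$; in particular $\partial_tu(\cdot,t)\ll\Leb$ for a.e.\ $t$.

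\textbf{Properties of $\nu$ and $\mu$.} Non-negativity of $\nu$ is given by Lemma \ref{nupos}. Writing $\nu_t=\pi^1_\sharp\gamma_{bi,t}$ for an optimal plan $\gamma_t$ of (\ref{EqMonge}), Proposition \ref{projection} forces $\spt(\nu_t)\subset p_g^+(\spt f)$. Lemma \ref{PrimalDualClassic}(i) gives both the integral representation of $\mu_t$ in terms of $\gamma_t$ and the uniqueness of $\mu_t$ once $\nu_t$ is fixed, while part (ii) of the same lemma yields $D_{\mu_t}u=Du$ $\mu_t$-a.e.

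\textbf{The strict inequality case and the approximation.} When $u(\cdot,t)<g$ on $\partial\Omega$, condition $(B_2)$ forces $\nu_t=0$, so the optimal plan $\gamma_t$ transports $\partial_tu(\cdot,t)$ to $f$. Since $\partial_tu(\cdot,t)\ll\Leb$, the classical regularity theory of the Monge transport density (as used in Lemma \ref{PrimalDualClassic}, cf.\ \cite{AmbPra2003}) delivers $\mu_t\ll\Leb$ together with uniqueness of $\mu_t$ independently of the chosen $\gamma_t$. Item 4) is obtained by combining the explicit shape of $u^n$ given by Lemma \ref{miniU} with the ODE of Lemma \ref{lemODE}; Remark \ref{unifConv}, upgraded to convergence of the entire sequence thanks to uniqueness of $u$, gives the pointwise uniform convergence for every $t$. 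The single non-routine point, where I expect to spend real effort, is the absolute continuity of $\mu_t$ in the last case: one has to justify quoting an $L^p$-regularity result for the transport density in the asymmetric situation where only one marginal is a Lebesgue-density and the other is the general measure $f\in\M^+(\Omega)$, rather than the classical $L^p\times L^p$ setting.
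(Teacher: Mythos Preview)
Your proposal is correct and follows essentially the same route as the paper, which in fact treats everything except the second half of item 3) as already contained in Propositions \ref{cvUn}--\ref{limitPDE} and Theorem \ref{Prig}, and then disposes of that remaining point in three lines. Regarding the concern you flag at the end: the asymmetric situation (one marginal absolutely continuous, the other an arbitrary measure) is precisely what the references \cite{DePPra2002CV,DePEvPRA2004BLMS,DePPr2004COCV,San2009CV} cover---in particular \cite{San2009CV} gives absolute continuity (and $L^p$ summability) of the transport density assuming only one marginal is in $L^p$---so no extra work is needed beyond citing the right source; the reference \cite{AmbPra2003} you mention concerns uniqueness of $\sigma$, not its regularity.
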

\begin{proof}
It only remains to show the second part of 3). Indeed, when $u(\cdot, t)< g$, by Lemma \ref{nu-nu}, $\nu_t=0$. So $\gamma_t$ is the solution of the Monge transportation problem with marginals $\partial_tu(\cdot, t)$ and $f$.  As $\partial_t u(\cdot,t)$ is absolutely continuous, so is $\mu_t$ (see \cite{DePPra2002CV,DePEvPRA2004BLMS,DePPr2004COCV,San2009CV}).
\end{proof}
{\textbf{Acknowledgements:}} The second author would like to thank the University of Brest (UBO) and Pisa for financial supports. She is very grateful to the staff of the University of Pisa for the efficiency and warm welcome she always received during her stays in Pisa.

\bibliographystyle{plain}

\end{document}